 \DeclareMathOperator{\perm}{Sym}
 \DeclareMathOperator{\soc}{soc}
 \DeclareMathOperator{\frat}{Frat}
\DeclareMathOperator{\GL}{GL} \DeclareMathOperator{\gaml}{\Gamma L}
\DeclareMathOperator{\core}{Core}
\DeclareMathOperator{\End}{End} \DeclareMathOperator{\h}{{H^1}}
\DeclareMathOperator{\ider}{InnDer} 
\DeclareMathOperator{\der}{Der} \DeclareMathOperator{\diag}{Diag}
 \newcommand{\del}{\delta}\newcommand{\deltat}{\delta\cdot\theta}
 \newcommand{\ol}{\overline}
\newcommand{\w}{\widetilde}
\newtheorem{thm}{Theorem}
\newtheorem{cor}[thm]{Corollary}
 \newtheorem{lemma}[thm]{Lemma}
\newtheorem{prop}[thm]{Proposition}
\numberwithin{equation}{section}
\renewcommand{\footnote}{\endnote}
\begin{document}
\bibliographystyle{amsplain}
\title[The Chebotarev invariant]{An upper bound on the Chebotarev invariant of a finite group\thanks{Partially supported by Universit\`a di Padova (Progetto di Ricerca di Ateneo: \lq\lq Invariable generation of groups\rq\rq).}}
\author{Andrea Lucchini}
\address{Universit\`a degli Studi di Padova, Dipartimento di Matematica,\\ Via Trieste 63, 35121 Padova, Italy}
\author{Gareth Tracey}
\address{Mathematics Institute, University of Warwick,\\
	Coventry CV4 7AL, United Kingdom}

\subjclass{20D10, 20F05, 05C25}

\begin{abstract}A subset $\{g_1, \ldots , g_d\}$ of a finite group $G$  invariably generates $G$ if the set $\{g_1^{x_1}, \ldots, g_d^{x_d}\}$ generates $G$ for every choice of $x_i \in G$. The Chebotarev invariant $C(G)$ of $G$ is the expected value of the random variable $n$ that is minimal
	subject to the requirement that $n$ randomly chosen elements of $G$ invariably generate $G$. The first author recently showed that $C(G)\le \beta\sqrt{|G|}$ for some absolute constant $\beta$. In this paper we show that, when $G$ is soluble, then $\beta$ is at most $5/3$. We also show that this is best possible. Furthermore, we show that, in general, for each $\epsilon>0$ there exists a constant $c_{\epsilon}$ such that $C(G)\le (1+\epsilon)\sqrt{|G|}+c_{\epsilon}$. 
\end{abstract}

\maketitle
\section{Introduction}
Following \cite{ig} and \cite{dixon}, we say that a subset $\left\{g_{1},g_{2},\hdots,g_{d}\right\}$ of a group $G$ \emph{invariably generates} $G$ if $\left\{g_{1}^{x_{1}},g_{2}^{x_{2}},\hdots,g_{d}^{x_{d}}\right\}$ generates $G$ for every $d$-tuple $(x_{1},x_{2}\hdots,x_{d})\in G^{d}$. The Chebotarev invariant $C(G)$ of $G$ is the expected value of the random variable $n$ that is minimal subject to the requirement that $n$ randomly chosen elements of $G$ invariably generate $G$.

In \cite{kz}, Kowalski and Zywina conjectured that $C(G)=O(\sqrt{|G|})$ for every finite group $G$. Progress on the conjecture was first made in \cite{ig}, where it was shown that $C(G)=O(\sqrt{|G|}\log{|G|})$ (here, and throughout this paper,``$\log$" means $\log$ to base $2$). The conjecture was confirmed by the first author in \cite{ACheboGen}; more precisely, \cite[Theorem 1]{ACheboGen} states that \emph{there exists an absolute constant $\beta$ such that $C(G)\le \beta\sqrt{|G|}$ whenever $G$ is a finite group}.

In this paper, we use a different approach to the problem. In doing so, we show that one can take $\beta=5/3$ when $G$ is soluble, and that this is best possible. Furthermore, we show that for each $\epsilon>0$, there exists a constant $c_{\epsilon}$ such that $C(G)\le (1+\epsilon)\sqrt{|G|}+c_{\epsilon}$. From \cite[Proposition 4.1]{kz}, one can see that this is also (asymptotically) best possible. 

Our main result is as follows   
\begin{thm}\label{KZConjecture} Let $G$ be a finite group. \begin{enumerate}[(i)]
		\item For any $\epsilon>0$, there exists a constant $c_{\epsilon}$ such that $C(G)\le (1+\epsilon)\sqrt{|G|}+c_{\epsilon}$;
		\item If $G$ is a finite soluble group, then
		$C(G)\leq \frac{5}{3}\sqrt{|G|},$ with equality if and only if $G=C_{2}\times C_{2}$.
	\end{enumerate}
\end{thm}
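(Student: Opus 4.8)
The plan is to set up a framework for estimating $C(G)$ via a sum over conjugacy classes of maximal subgroups, and then to push the two parts of Theorem~\ref{KZConjecture} through a careful induction. The starting point is the standard observation that a tuple $(g_1,\ldots,g_n)$ fails to invariably generate $G$ precisely when there is a maximal subgroup $M$ of $G$ such that every $g_i$ lies in some conjugate of $M$; writing $P_I(G,n)$ for the probability that $n$ random elements do \emph{not} invariably generate $G$, inclusion--exclusion over the (finitely many) conjugacy classes $\mathcal{M}_1,\ldots,\mathcal{M}_k$ of maximal subgroups gives the bound $P_I(G,n)\le \sum_{j} \big(\sum_{M\in\mathcal{M}_j}|M|/|G|\big)^n = \sum_j (m_j/|G|)^n$, where $m_j=\sum_{M\in\mathcal{M}_j}|M|$ is the number of elements of $G$ lying in a conjugate of a fixed $M\in\mathcal{M}_j$. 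Since $C(G)=\sum_{n\ge 0}P_I(G,n)$, we obtain $C(G)\le \sum_{n\ge0}\min\big(1,\sum_j(m_j/|G|)^n\big)$. The game is then entirely about controlling the quantities $m_j/|G|$: roughly, a maximal subgroup of index $q$ contributes about $1/q$ to the base of the geometric series, and the contribution of a prime power index $q$ to $C(G)$ is on the order of $1/(1-1/q)\approx 1$ for small $q$ and decays for large $q$.

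For part~(ii), the soluble case, I would argue by induction on $|G|$. The key structural input is that in a soluble group the maximal subgroups are well understood: by a theorem of Gasch\"utz, the maximal subgroups $M$ with $\mathrm{Core}_G(M)=1$ correspond to complements to a minimal normal subgroup $N$ (an elementary abelian $p$-group), all such $M$ are conjugate, $[G:M]=|N|=p^a$ is a prime power, and the number of conjugates is exactly $|N|=p^a$ divided by... more precisely one shows $m/|G|=1/p^a$ for the core-free class, while maximal subgroups with nontrivial core are accounted for inductively by passing to the quotient $G/\mathrm{Core}_G(M)$. So one reduces to: (a) a sum over the distinct prime-power ``orders of chief factors'' appearing, with the core-free contribution governed by the chief factors of a single minimal normal subgroup, plus (b) the inductive contribution from $G/N$. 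The worst case should come from very small groups; the extremal analysis needs to show that $C(G)/\sqrt{|G|}$ is maximised at $G=C_2\times C_2$, where $C(C_2\times C_2)=10/3$ and $\sqrt{|G|}=2$, giving the constant $5/3$. Concretely, for $C_2\times C_2$ there are three maximal subgroups each of index $2$, all core-free, so $P_I(n)=3\cdot 2^{-n}$ for $n\ge2$ and one computes $C=\sum_{n\ge0}\min(1,3\cdot2^{-n})=1+1+1+3/4+\cdots=10/3$. The induction must verify that introducing any further chief factor, or enlarging a chief factor beyond $C_2\times C_2$, strictly decreases the ratio $C(G)/\sqrt{|G|}$; this is a somewhat delicate but elementary optimisation over geometric series, and I expect a short case analysis for the smallest groups ($|G|\le$ some explicit bound) together with a clean monotonicity estimate for the general inductive step.

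For part~(i), the general case, the extra ingredient is that non-abelian chief factors, and more generally maximal subgroups of non-soluble groups, have \emph{large} index — this is where results on minimal indices of maximal subgroups of almost simple groups (and the associated crown/chief-factor bookkeeping) enter. The strategy is to split the maximal subgroups of $G$ into those of ``small'' index and those of ``large'' index, where the threshold is chosen depending on $\epsilon$. The large-index maximal subgroups contribute a total of at most some absolute constant (independent of $G$) to $C(G)$ once one knows that, for each prime power $q$, the number of conjugacy classes of maximal subgroups of index exactly $q$ is bounded — here one invokes the known bound on the number of conjugacy classes of maximal subgroups of a given index, which follows from the classification of finite simple groups. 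The small-index part is handled essentially as in the soluble case: the relevant normal section is soluble-by-(bounded), and one bounds $C(G)\le (1+\epsilon)\sqrt{|G|}$ up to an additive error $c_\epsilon$ coming from the finitely many ``dangerous'' small indices and classes.

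\medskip
\noindent\textbf{Main obstacle.} The hard part will be the extremal/inductive analysis in part~(ii): proving not just the bound $5/3$ but that equality is \emph{only} at $C_2\times C_2$ forces one to track the geometric-series estimates tightly enough that the usual slack in inclusion--exclusion is unaffordable. In particular one must handle the interaction between several chief factors of the same prime-power order (which inflate $m_j/|G|$ less than naively expected, since their complements need not be conjugate across factors) and show the ratio is strictly submultiplicative under the relevant extensions. For part~(i), the main technical burden is instead purely bookkeeping: quantifying ``how many'' maximal subgroups of each small index there can be, uniformly in $G$, which is exactly where the classification-of-finite-simple-groups input is unavoidable.
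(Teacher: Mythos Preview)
Your framework --- bound $P_I(G,n)$ by a union over conjugacy classes of maximal subgroups, sum the resulting geometric series, and induct --- is where the paper starts too (this is essentially Proposition~\ref{firstred}). But there is a concrete error that breaks the soluble analysis: the claim that for a core-free maximal $M$ of index $p^a$ one has $m/|G|=1/p^a$ is false. In $G=V\rtimes H$ with $V$ minimal normal abelian and $M=H$, an element $vh$ lies in some conjugate $H^w$ iff $v\in (h-1)V$, so the proportion of $G$ covered by $\widetilde M$ is $\frac{1}{|H|}\sum_{h\in H}|[h,V]|/|V|$, which is typically much larger than $1/|V|$ (for $G=S_3$ it is $2/3$, not $1/3$). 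Worse, when the same module $V$ occurs $\delta$ times in a chief series there are many non-conjugate classes of such $M$, and your scheme of summing $1/(1-m_j/|G|)$ over classes blows up with $\delta$ rather than giving the $O(\delta)$ one needs. This is exactly why the paper does \emph{not} work class by class.

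What the paper does instead is group the maximal subgroups by the chief factor $V$ they ``see'' via crowns, reduce to the crown-based power $\overline G\cong V^\delta\rtimes H$, and then prove a genuinely new estimate (Proposition~\ref{crucial}): if $g_i=w_ih_i$ satisfy the $V$-property, then the $\delta$ ``row vectors'' $r_j=(w_{1,j},\ldots,w_{k,j})\in V^k$ are $F$-linearly dependent modulo an explicit subspace $W+D$ determined by the $h_i$. This converts the union-bound into a question about random vectors in $F^{nk}$ avoiding a subspace of known codimension, which (after a binomial-tail computation, Proposition~\ref{binomale}) yields the sharp quantity $\alpha_U$ in Proposition~\ref{CheboProp} and the inductive inequality $C(G)\le C(G/U)+\alpha_U$. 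Part~(ii) is then a careful but elementary verification (Lemma~\ref{tecn}) that $\alpha_U/\sqrt{|G|}<\tfrac{5}{3}\cdot\tfrac{\sqrt{|U|}-1}{\sqrt{|U|}}$ outside a short list of small cases handled by direct computation, and Part~(i) shows $\alpha_U\le(1+\beta_U)\sqrt{|G|}$ with $\beta_U\to 0$, using the classification only to control the transitive linear groups and (for nonabelian $V$) quoting bounds from \cite{ACheboGen}. Your ``split by index and count classes'' plan for Part~(i) is in a different direction and, without the linear-algebra reduction, I do not see how it produces the asymptotic constant $1$.
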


We also derive an upper bound on $C(G)$, for a finite soluble group $G$, in terms of the set of \emph{crowns} for $G$. Before stating this result, we require the following notation: Let $G$ be a finite soluble group. Given an irreducible $G$-module $V$ which is $G$-isomorphic to a complemented chief factor of $G$, let $\delta_V(G)$ be the number of
complemented factors in a chief series of $G$ which are $G$-isomorphic to $V$. Then set $\theta_V(G)=0$ if $\delta_{V}(G)=1$, and $\theta_{V}(G)=1$ otherwise. Also, let $q_{V}(G):=|\End_{G}(V)|$, let $n_{V}(G):=\dim_{\End_{G}(V)}{V}$, and let $H_{V}(G):=G/C_G(V)$ (we will suppress the $G$ in this notation when the group is clear from the context). Also, let $\sigma:=2.118456563\hdots$ be the constant appearing in \cite[Corollary 2]{Pomerance}.
The afore mentioned upper bound can now be stated as follows.
\begin{thm}\label{CheboSolTheorem} Let $G$ be a finite soluble group, and let $A$ [respectively $B$] be a set of representatives for the irreducible $G$-modules which are $G$-isomorphic to a non-central [resp. central] complemented chief factor of $G$. Then
	$$C(G)\le \sum_{V\in A} \min\left\{(\delta_{V}\cdot\theta_{V}+c_{V})|V|,\left(\left\lceil\frac{\delta_{V}\cdot\theta_{V}}{n_{V}}\right\rceil+\frac{q_{V}^{n_V}}{q_{V}^{n_V}-1}\right)|H_{V}|\right\}+\max_{V\in B}{\del_{V}}+\sigma$$
	where $c_{V}:=q_{V}/(q_{V}-1)\le 2$.\end{thm}

The layout of the paper is as follows. In Section 2 we recall the notion of a \emph{crown} in a finite group. In Section 3 we prove Theorem \ref{CheboSolTheorem} and deduce a number of consequences, while Section 4 is reserved for the proof of Theorem \ref{KZConjecture} Part (i). Finally, we prove Theorem \ref{KZConjecture} Part (ii) in Section 5.

\section{Crowns in finite groups}
In Section 2, we recall the notion and the main properties of crowns in finite groups. Let $L$ be   a monolithic primitive group
and let $A$ be its unique minimal normal subgroup. For each positive integer $k$,
let $L^k$ be the $k$-fold direct
product of $L$.
The crown-based power of $L$ of size  $k$ is the subgroup $L_k$ of $L^k$ defined by
$$L_k=\{(l_1, \ldots , l_k) \in L^k  \mid l_1 \equiv \cdots \equiv l_k \ {\mbox{mod}} A \}.$$
Equivalently, $L_k=A^k \diag L^k$. 

Following  \cite{paz},  we say that
two irreducible $G$-groups $V_1$ and $V_2$  are  {$G$-equivalent} and we put $V_1 \sim_G V_2$, if there are
isomorphisms $\phi: V_1\rightarrow V_2$ and $\Phi: V_1\rtimes G \rightarrow V_2\rtimes G$ such that the following diagram commutes:

\begin{equation*}
\begin{CD}
1@>>>V_{1}@>>>V_{1}\rtimes G@>>>G@>>>1\\
@. @VV{\phi}V @VV{\Phi}V @|\\
1@>>>V_{2}@>>>V_{2}\rtimes G@>>>G@>>>1.
\end{CD}
\end{equation*}

\

Note that two $G$\nobreakdash-isomorphic
$G$\nobreakdash-groups are $G$\nobreakdash-equivalent. In the particular case where $V_1$ and $V_2$ are abelian the converse is true:
if $V_1$ and $V_2$ are abelian and $G$\nobreakdash-equivalent, then $V_1$
and $V_2$ are also $G$\nobreakdash-isomorphic.
It is proved (see for example \cite[Proposition 1.4]{paz}) that two  chief factors $V_1$ and $V_2$ of $G$ are  $G$-equivalent if and only if  either they are  $G$-isomorphic between them or there exists a maximal subgroup $M$ of $G$ such that $G/\core_G(M)$ has two minimal normal subgroups $N_1$ and $N_2$
$G$-isomorphic to $V_1$ and $V_2$ respectively. For example, the  minimal normal subgroups of a crown-based power $L_k$ are all $L_k$-equivalent.

Let $V=X/Y$ be a chief factor of $G$. A complement $U$ to $V$ in $G$ is a subgroup $U $ of $G$ such that $UV=G$ and $U \cap X=Y$. We say that   $V=X/Y$ is a Frattini chief factor if  $X/Y$ is contained in the Frattini subgroup of $G/Y$; this is equivalent to saying that $V$ is abelian and there is no complement to $V$ in $G$.
The  number $\delta_V(G)$  of non-Frattini chief factors $G$-equivalent to $V$   in any chief series of $G$  does not depend on the series. Now,
we denote by  $L_V$  the  monolithic primitive group  associated to $V$,
that is
$$L_{V}=
\begin{cases}
V\rtimes (G/C_G(V)) & \text{ if $V$ is abelian}, \\
G/C_G(V)& \text{ otherwise}.
\end{cases}
$$
If $V$ is a non-Frattini chief factor of $G$, then $L_V$ is a homomorphic image of $G$.
More precisely,  there exists
a normal subgroup $N$ of $G$ such that $G/N \cong L_V$ and $\soc(G/N)\sim_G V$. Consider now  all the normal subgroups $N$ of $G$ with the property that  $G/N \cong L_V$ and $\soc(G/N)\sim_G V$:
the intersection $R_G(V)$ of all these subgroups has the property that  $G/R_G(V)$ is isomorphic to the crown-based  power $(L_V)_{\delta_V(G)}$.
The socle $I_G(V)/R_G(V)$ of $G/R_G(V)$ is called the $V$-crown of $G$ and it is  a direct product of $\delta_V(G)$ minimal normal subgroups $G$-equivalent to $V$. 

\begin{lemma}{\cite[Lemma 1.3.6]{classes}}\label{corona}
	Let $G$ be a finite  group with trivial Frattini subgroup. There exists
	a chief factor $V$ of $G$ and a non trivial normal subgroup $U$ of $G$ such that $I_G(V)=R_G(V)\times U.$
\end{lemma}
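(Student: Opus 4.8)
The plan is to exploit the structure of a finite group $G$ with $\frat(G) = 1$ via the socle. Since $\frat(G) = 1$, the socle $\soc(G)$ is a direct product of minimal normal subgroups of $G$, say $\soc(G) = N_1 \times \cdots \times N_r$. First I would observe that, because $\frat(G) = 1$, every minimal normal subgroup of $G$ is complemented (this is standard: a non-complemented minimal normal subgroup would necessarily be abelian and central modulo something — more precisely, an abelian minimal normal subgroup with no complement lies in $\frat(G)$, and a non-abelian one is always complemented). Pick one minimal normal subgroup $N := N_1$, and let $V$ be the chief factor $N/1 = N$; so $V$ is a non-Frattini chief factor of $G$. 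Now consider the $V$-crown: by definition $I_G(V)/R_G(V) = \soc(G/R_G(V))$ is a direct product of $\delta_V(G)$ minimal normal subgroups of $G/R_G(V)$, each $G$-equivalent to $V$, and $G/R_G(V) \cong (L_V)_{\delta_V(G)}$.

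The key step is to locate a normal subgroup $U$ of $G$ with $U \ne 1$ and $I_G(V) = R_G(V) \times U$. I would look for $U$ inside $\soc(G)$: specifically, among the $N_i$ that are $G$-equivalent to $V$ and not contained in $R_G(V)$, one should serve as (or generate) the complement. Since $\frat(G) = 1$ forces $R_G(V) \cap \soc(G)$ to be a complemented submodule-like piece, and since $I_G(V)/R_G(V)$ is a completely reducible $G/R_G(V)$-module (it is a direct product of minimal normal subgroups), the preimage structure gives $I_G(V) = R_G(V) (I_G(V) \cap \soc(G))$, and one can split off a $G$-invariant complement $U$ to $R_G(V) \cap I_G(V) \cap \soc(G)$ inside $I_G(V) \cap \soc(G)$. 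The fact that $N = N_1$ is a non-Frattini chief factor guarantees $\delta_V(G) \ge 1$ and that $I_G(V) \ne R_G(V)$, so $U \ne 1$. Finally I would check $U \trianglelefteq G$ (automatic, as $U$ is chosen $G$-invariant) and $R_G(V) \cap U = 1$ together with $R_G(V) U = I_G(V)$ — these follow from the complete reducibility of the crown $I_G(V)/R_G(V)$ as a $G$-group.

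The main obstacle is verifying cleanly that the complement $U$ can be taken to be an honest direct factor of $I_G(V)$ \emph{as a group} (not merely that $I_G(V)/R_G(V)$ is complemented in some quotient sense): one needs that $I_G(V) \cap \soc(G)$ maps onto $I_G(V)/R_G(V)$ and that, because $\soc(G)$ is a direct product of minimal normal subgroups and $\frat(G) = 1$, the $G$-submodule $R_G(V) \cap I_G(V) \cap \soc(G)$ has a $G$-invariant direct complement in $I_G(V) \cap \soc(G)$. For the abelian part of the socle this is just Maschke-type complete reducibility of completely reducible modules; for the non-abelian part it is the fact that a direct product of non-abelian minimal normal subgroups has all its $G$-invariant subgroups as direct factors. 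Since this is exactly the content of \cite[Lemma 1.3.6]{classes}, I would simply cite it; the argument above is the natural route one would take to reprove it.
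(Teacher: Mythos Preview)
The paper does not give a proof of this lemma at all: it is stated with the citation \cite[Lemma 1.3.6]{classes} and used as a black box. Your proposal ultimately lands in the same place --- you conclude by citing \cite[Lemma 1.3.6]{classes} --- so in that sense your approach agrees with the paper's.

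Regarding the sketch you offer along the way: the broad strategy (work inside $\soc(G)$, use that $\frat(G)=1$ forces complete reducibility, and peel off a $G$-invariant direct complement) is the standard route, but one step is not justified as written. You assert $I_G(V) = R_G(V)\bigl(I_G(V)\cap \soc(G)\bigr)$, i.e.\ that $\soc(G)$ surjects onto the crown $I_G(V)/R_G(V)$. This is not automatic: $I_G(V)/R_G(V)$ is the socle of $G/R_G(V)$, and the image of $\soc(G)$ in a quotient need not hit the whole socle of that quotient. One really needs the stronger input that, when $\frat(G)=1$, the minimal normal subgroups of $G$ that are $G$-equivalent to $V$ already account for all $\delta_V(G)$ factors of the crown --- this is precisely what the cited lemma in \cite{classes} establishes, and your sketch does not supply an independent argument for it. So the sketch is a plausible outline but has a genuine gap at that point; citing \cite[Lemma 1.3.6]{classes}, as both you and the paper do, is the clean way to proceed.
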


\begin{lemma}{\cite[Proposition 11]{crowns}}\label{sotto} Assume that $G$ is a finite  group with trivial Frattini subgroup and let $I_G(V), R_G(V), U$ be as in the statement of Lemma \ref{corona}. If $KU=KR_G(V)=G,$ then $K=G.$
\end{lemma}

\section{Crown-based powers with abelian socle}
The aim of this section is to prove Theorem \ref{CheboSolTheorem}. For a finite group $G$ and an irreducible $G$-group $V$, we write $\Omega_{G,V}$ for the set of maximal subgroups $M$ of $G$ such that either $\soc{(G/\core_{G}(M))}\sim_{G}V$ or $\soc{(G/\core_{G}(M))}\sim_G V\times V$. Also, for $M\in \Omega_{G,V}$, we write $\w{M}$ for the union of the $G$-conjugates of $M$. We will also say that the elements $g_{1}$, $g_{2}$, $\hdots$, $g_{k}\in G$ \emph{satisfy the $V$-property in $G$} if $g_{1}$, $g_{2}$, $\hdots$, $g_{k}\in \w{M}$ for some $M\in \Omega_{V}$. Finally, let $P_{G,V}^{\ast}(k)$ denote the probability that $k$ randomly chosen elements of $G$ satisfy the $V$-property in $G$.

Suppose now that $V$ is abelian, and consider the faithful irreducible linear group $H:=G/C_G(V)$. We will denote by $\der(H,V)$ the set of the derivations from
$H$ to $V$ (i.e. the maps $\zeta: H\to V$ with the property that $\zeta(h_1h_2)=\zeta(h_1)^{h_2}+\zeta(h_2)$ for every $h_1,h_2\in H$). If $v\in V$ then the map $\zeta_v:H\to V$
defined by $\zeta_v(h)=[h,v]$ is a derivation, called an \emph{inner derivation} from $H$ to $V$. The set  $\ider(H,V)=\{\zeta_v\mid v \in V\}$ of the inner derivations from $H$ to $V$ is a subgroup of $\der(V,H)$ and the factor group $\h(H,V)=\der(H,V)/\ider(H,V)$ is the first cohomology group of $H$
with coefficients in $V.$ 

\begin{prop}\label{crucial} Let $H$ be a group acting faithfully and irreducibly on an elementary abelian $p$-group $V$. For a positive integer $u$, we consider the semidirect product $G=V^{u}\rtimes H$ where the action of $H$ is diagonal on $V^{u}$; that is, $H$ acts in the same away on each of the $u$ direct factors. Assume also that $u=\delta_{V}(G)$. View $V$ as a vector space over the field $F=End_{H}(V)$. Let $h_{1},\hdots,h_{k}\in H$, and $w_{1},\hdots,w_{k}\in V^{u}$, and write $w_{i}=(w_{i,1},w_{i,2},\hdots,w_{i,u})$. Assume that $h_{1}w_{1},h_{2}w_{2},\hdots,h_{k}w_{k}$ satisfy the $V$-property in $G$. Then for $1\le j\le u$, the vectors
	$$r_{j}:=(w_{1,j},w_{2,j},\hdots,w_{k,j})$$
	of $V^{k}$ are linearly dependent modulo the subspace $W+D$, where
	$$\begin{aligned}W &:=\left\{(y_{1},y_{2},\hdots,y_{k})\text{ : }y_{i}\in [h_{i},V]\text{ for }1\le i\le k\right\} \text{, and}\\
	D &:= \left\{(\zeta(h_1),\zeta(h_2),\hdots,\zeta(h_k) )\in V^{k}\text{ : }\zeta\in\der(H,V)\right\}.\end{aligned}$$
\end{prop}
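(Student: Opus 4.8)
The plan is to unpack what it means for $h_1w_1,\dots,h_kw_k$ to satisfy the $V$-property in $G$, and then translate the resulting subgroup membership into a linear-algebra statement over $F=\End_H(V)$. By definition there is a maximal subgroup $M\in\Omega_{G,V}$ such that each $h_iw_i$ lies in $\widetilde{M}$, the union of the $G$-conjugates of $M$. I would first recall the structure of such maximal subgroups: since $V$ is abelian and $\soc(G/\core_G(M))\sim_G V$ (or $\sim_G V\times V$, the case $\theta_V=1$ where there are at least two complemented factors), the core $\core_G(M)$ contains all but one ``diagonal'' copy of $V$ inside $V^u$, and $M/\core_G(M)$ is a complement to that copy in $L_V=V\rtimes H$. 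Concretely, a suitable $G$-conjugate of $M$ is the stabilizer of a point, and $M$ corresponds to the kernel of a linear functional; the standard way to encode this (cf.\ the parametrization of complements to $V$ in $V\rtimes H$ by $\der(H,V)$, up to $\ider(H,V)$) is that there is a nonzero vector $\lambda=(\lambda_1,\dots,\lambda_u)\in F^u$ and a derivation $\zeta\in\der(H,V)$ such that, after conjugating, $M$ consists of the elements $h\cdot v$ with $\sum_j \lambda_j v_j = \zeta(h)$ modulo $[h,V]$, or something of this exact shape. The point is that membership of $h_iw_i$ in $\widetilde{M}$ forces, for each $i$, an equation of the form $\sum_{j=1}^{u}\lambda_j w_{i,j}\equiv \zeta_i(h_i) \pmod{[h_i,V]}$ for a common derivation-type datum; the conjugation freedom (the $x_i$ in the definition of $\widetilde M$) is precisely absorbed by allowing the inner-derivation part to vary, i.e.\ by working modulo $\ider(H,V)+[h_i,V]$ componentwise.

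The second step is bookkeeping: assemble these $k$ scalar-in-$F$ equations into a single relation among the vectors $r_j=(w_{1,j},\dots,w_{k,j})\in V^k$. Reading the equation $\sum_j \lambda_j w_{i,j}\equiv \zeta(h_i)\pmod{[h_i,V]}$ across all $i=1,\dots,k$ simultaneously says exactly that
$$\sum_{j=1}^{u}\lambda_j\, r_j \;\in\; D + W,$$
where $W=\{(y_1,\dots,y_k): y_i\in[h_i,V]\}$ accounts for the ``mod $[h_i,V]$'' in each coordinate and $D=\{(\zeta(h_1),\dots,\zeta(h_k)):\zeta\in\der(H,V)\}$ accounts for the right-hand sides $\zeta(h_i)$; note $D$ already contains the inner-derivation contributions, so the conjugation freedom is harmless. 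Since $\lambda\neq 0$ in $F^u$, this is a nontrivial $F$-linear dependence among $r_1,\dots,r_u$ modulo $W+D$, which is the claim. I would also double-check that $W+D$ is genuinely an $F$-subspace of $V^k$ (each $[h_i,V]$ is $\End_H(V)$-stable because it is an $H$-submodule of $V$ and $F$ acts $H$-equivariantly; and $D$ is $F$-stable because $F$ commutes with the $H$-action used to define derivations), so that ``linearly dependent modulo $W+D$'' makes sense over $F$.

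The main obstacle — and the step deserving the most care — is the first one: pinning down the precise description of the maximal subgroups $M\in\Omega_{G,V}$ and of $\widetilde M$ when $G=V^u\rtimes H$ with $u=\delta_V(G)$ and diagonal action. One must handle both possibilities in the definition of $\Omega_{G,V}$ ($\soc(G/\core_G(M))\sim_G V$ and $\sim_G V\times V$), verify that in all cases $\core_G(M)\supseteq$ a codimension-one (over $F$, inside $V^u$) sub-$G$-module together with the relevant part of $H$, and correctly identify the complement $M/\core_G(M)$ with a coset of $\ider(H,V)$ inside $\der(H,V)$ via the classical correspondence between complements and derivations. The $V\times V$ case is where $\theta_V$ enters and where one has to be slightly careful that the functional $\lambda$ may be forced, but it still yields the same type of equation. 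Once that structural description is in hand, translating membership in $\widetilde M$ (a union over $G$-conjugates, i.e.\ over choices of the point being stabilized, which shifts $\zeta$ by inner derivations and shifts the $[h_i,V]$-ambiguity not at all) into the displayed containment $\sum_j\lambda_j r_j\in W+D$ is essentially formal. I would close by remarking that the hypothesis $u=\delta_V(G)$ is what guarantees every such $M$ arises this way — equivalently that $G/R_G(V)$ is the full crown-based power $(L_V)_u$ — so that no ``extra'' maximal subgroups outside this description can occur.
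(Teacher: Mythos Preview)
Your approach is sound and yields a correct proof, but it takes a substantially different route from the paper's. The paper's argument is three lines: from $u=\delta_V(G)$ it deduces $V^u\not\subseteq M$, hence $MV^u=G$ and $M/(M\cap V^u)\cong H$; it then picks further elements $h_{k+1}w_{k+1},\dots,h_{k+t}w_{k+t}\in M$ so that $h_1,\dots,h_{k+t}$ invariably generate $H$, and invokes \cite[Proposition~6]{ACheboGen} as a black box to conclude that $r_1,\dots,r_u$ are dependent modulo $W+D$ (one then projects from $V^{k+t}$ back to $V^k$). In effect the paper outsources all of your structural analysis of $M$ to an external result.

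Your direct description of the maximal subgroups buys transparency: one sees explicitly that $\core_G(M)=\ker\lambda$ for a nonzero $\lambda\in F^u$, that complements to $V$ in $L_V$ are parametrized by $\der(H,V)$, and that passing to a $G$-conjugate of $M$ shifts the associated derivation by an inner one, whence the single relation $\sum_j\lambda_j r_j\in W+D$. Two points deserve tightening. First, your caution about the $V\times V$ case is unnecessary here: since $V$ is abelian, any primitive quotient $G/\core_G(M)$ has a \emph{unique} minimal normal subgroup, so $\soc(G/\core_G(M))\sim_G V\times V$ never occurs and $\theta_V$ plays no role at this stage. Second, the claim that $\core_G(M)\subseteq V^u$ (so that your parametrization by $(\lambda,[\zeta])$ is exhaustive) needs a short justification: if not, then $V^uN/N$, with $N=\core_G(M)$, would be an abelian normal subgroup of $G/N\cong V\rtimes H$ of order strictly larger than $|V|$, impossible since every normal subgroup of $V\rtimes H$ properly containing $V$ meets $H$ nontrivially and is therefore nonabelian (faithfulness of $H$ on $V$). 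With these two remarks your argument is complete and more self-contained than the paper's, at the cost of length.
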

\begin{proof} Let $M$ be a maximal subgroup of $G$ such that $M\in \Omega_{V}$, and $h_{1}w_{1},\hdots,$ $h_{k}w_{k}\in \widetilde{M}$. Since $u=\delta_{V}(G)$, $M$ cannot contain $V^{u}$, and hence $MV^{u}=G$. Thus, $M/M\cap V^{u}\cong H$, and hence there exists an integer $t\ge 0$ and elements $h_{k+1}w_{k+1},\hdots,h_{k+t}w_{k+t}\in M$ such that $h_{1},\hdots,h_{k},h_{k+1},\hdots,h_{k+t}$ invariably generate $H$. But then, \cite[Proposition 6]{ACheboGen} implies, in particular, that $r_{1},\hdots,r_{u}\in V^{k}$ are linearly dependent modulo $W+D$, as needed.\end{proof}

Before proceeding to the proof of Theorem \ref{CheboSolTheorem}, we require the following easy result from probability theory.
\begin{prop}\label{binomale} Write $B(k,p)$ for the binomial random variable with $k$ trials and probability $0<p\le 1$. Fix $l\ge 0$. Then
	$$\sum_{k=l}^{\infty} P(B(k,p)=l)\le \frac{1}{p}.$$\end{prop}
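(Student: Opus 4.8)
The plan is to compute $P(B(k,p)=l)=\binom{k}{l}p^l(1-p)^{k-l}$ explicitly, sum over $k\ge l$, and recognize the resulting series in closed form. Writing $q=1-p$, the sum is $p^l\sum_{k\ge l}\binom{k}{l}q^{k-l}$. After the substitution $m=k-l$ this becomes $p^l\sum_{m\ge 0}\binom{m+l}{l}q^m$, and the key analytic fact is the negative-binomial generating-function identity $\sum_{m\ge 0}\binom{m+l}{l}q^m=(1-q)^{-(l+1)}=p^{-(l+1)}$, valid for $0\le q<1$. Substituting gives $p^l\cdot p^{-(l+1)}=1/p$, so in fact one gets equality $\sum_{k=l}^\infty P(B(k,p)=l)=1/p$ whenever $p<1$; the inequality is needed only to cover the degenerate case $p=1$, where the left-hand side is the single term $k=l$ contributing $1=1/p$ anyway. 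So the statement holds with equality throughout, and the ``$\le$'' is just a convenient weakening.

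Alternatively — and this is probably the cleaner write-up — I would give a probabilistic argument avoiding the generating-function identity. Run an infinite sequence of independent Bernoulli$(p)$ trials and let $T$ be the index of the $(l+1)$-st success (so $T=\infty$ with probability $0$ when $p>0$). For a fixed $k\ge l$, the event $\{B(k,p)=l\}$ is precisely the event that exactly $l$ of the first $k$ trials succeed; conditioning further on the outcome of trial $k+1$, one sees that $P(B(k,p)=l)=P(T=k+1)/p$ when trial $k+1$ is a success, so $\sum_{k\ge l}P(B(k,p)=l)=\frac{1}{p}\sum_{k\ge l}P(T=k+1)=\frac{1}{p}P(T<\infty)=\frac1p$. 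Either route is a two-line computation.

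There is no real obstacle here: the only thing to be slightly careful about is the interchange of summation (justified by nonnegativity, Tonelli) and the boundary case $p=1$, where $q=0$ and all terms with $k>l$ vanish, leaving the bound $1\le 1$. I would present the generating-function version in one sentence, note equality holds, and move on, since the proposition is only an auxiliary tail estimate used later to bound expectations of the form $\sum_k k\cdot(\text{something})$ against geometric-type series.
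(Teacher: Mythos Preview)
Your proposal is correct. Your first route---summing $p^l\sum_{m\ge 0}\binom{m+l}{l}q^m=(1-q)^{-(l+1)}$---is essentially the paper's argument: the paper also reduces to evaluating $\sum_{k\ge l}\binom{k}{l}x^{k-l}$ with $x=1-p$, but instead of quoting the negative-binomial identity it \emph{derives} it by writing $\binom{k}{l}x^{k-l}=\frac{1}{l!}\frac{d^l}{dx^l}x^k$, interchanging sum and derivative, and differentiating $\frac{1}{1-x}$. The paper records a ``$\le$'' at the step where it replaces $\sum_{k\ge l}x^k$ by $\frac{1}{1-x}$, but as you observe this is actually equality (the discarded terms form a polynomial of degree $l-1$, killed by the $l$-th derivative). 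Your probabilistic argument via the waiting time $T$ for the $(l{+}1)$-st success is a genuinely different and cleaner route: the identity $P(T=k+1)=p\cdot P(B(k,p)=l)$ gives the result in one line without any generating functions, at the cost of invoking a coupling with an infinite Bernoulli sequence. Either version is fine for this auxiliary lemma.
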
 
\begin{proof} Note first that
	$$\binom{k}{l}x^{k-l}=\frac{1}{l!}\frac{d^{l}}{dx^{l}}x^{k}$$
	where $\frac{d^{l}}{dx^{l}}x^{k}$ denotes the $l$-th derivative of $x^{k}$. Let $x=1-p$. By definition, $P(B(k,p)=l)=\binom{k}{l}(1-x)^{l}x^{k-l}$. Thus\begin{align*}
	\sum_{k=l}^{\infty}P(B(k,p)=l) &=(1-x)^{l}\sum_{k=l}^{\infty}\binom{k}{l}x^{k-l}\\
	&=\frac{(1-x)^{l}}{l!}\sum_{k=l}^{\infty}\frac{d^{l}}{dx^{l}}x^{k}\\
	&=\frac{(1-x)^{l}}{l!}\frac{d^{l}}{dx^{l}}\sum_{k=l}^{\infty}x^{k}\\
	&\le\frac{(1-x)^{l}}{l!}\frac{d^{l}}{dx^{l}}\frac{1}{1-x}\\
	&=\frac{(1-x)^{l}}{l!}\frac{l!}{(1-x)^{(l+1)}}=\frac{1}{1-x}=\frac{1}{p}\end{align*}
	as needed. (Note that the third equality above follows since the series $\sum_{k=l}^{\infty}x^{k}$ is convergent.)\end{proof}

We shall also require the following. We remark that since $P^*_{G,V}(k)\le  \sum_{\widetilde{M}\in \Omega_{V}}\left(\frac{|\widetilde{M}|}{|G|}\right)^{k}$ and $\frac{|\widetilde{M}|}{|G|}<1$, $\sum_{k=0}^{\infty} P^*_{G,V}(k)$ converges.
\begin{prop}\label{firstred} Let $G$ be a finite group, and let $A$ [respectively $B$] be a set of representatives for the irreducible $G$-groups which are $G$-equivalent to a non-central [resp. central] non-Frattini chief factor of $G$. Then\begin{enumerate}
		\item $C(G)\le \sum_{V\in A}\sum_{k=0}^{\infty} P^*_{G,V}(k)+\max_{V\in B}\delta_{V}+\sigma$, and;
		\item If $\frat(G)=1$ and $U$ and $V$ are as in Lemma \ref{corona}, then $C(G)\le C(G/U)+\sum_{k=0}^{\infty} P^*_{G,V}(k)$.\end{enumerate} 
\end{prop}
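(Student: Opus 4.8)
The plan rests on the standard identity $C(G)=\sum_{k\ge 0}P_k(G)$, where $P_k(G)$ is the probability that $k$ uniformly random elements $g_1,\dots,g_k\in G$ fail to invariably generate $G$; recall that $g_1,\dots,g_k$ fail to invariably generate $G$ precisely when $g_1,\dots,g_k\in\widetilde M$ for some maximal subgroup $M$ of $G$. The first thing to record is that every maximal subgroup $M$ of $G$ lies in $\Omega_{G,V}$ for some non-Frattini chief factor $V$: indeed $G/\core_G(M)$ is primitive, so its socle is either a minimal normal subgroup or the product of two minimal normal subgroups, and in either case these are all $G$-equivalent to a single chief factor $V$ (Section 2), which is non-Frattini because $M$ supplements the corresponding section of $G$. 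Moreover, if this $V$ lies in $B$, i.e.\ $V$ is central, then $G/\core_G(M)$ is a primitive group with an abelian minimal normal subgroup on which it acts trivially, hence is cyclic of prime order $p$, so that $M$ is a normal subgroup of $G$ of index $p$.

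For Part (1), let $K_0$ be the intersection of all normal subgroups of $G$ of prime index, so that $\widehat G:=G/K_0$ is abelian; since $G/R_G(C_p)\cong C_p^{\delta_{C_p}(G)}$ for every prime $p$ and these quotients have pairwise coprime orders, $\widehat G\cong\prod_p C_p^{\delta_{C_p}(G)}$ and hence $d(\widehat G)=\max_{V\in B}\delta_V(G)$. By the previous paragraph, for each $k$ the event that $g_1,\dots,g_k$ fail to invariably generate $G$ is contained in the union of the events ``$g_1,\dots,g_k$ satisfy the $V$-property in $G$'' for $V\in A$ together with the event ``$g_1K_0,\dots,g_kK_0$ fail to generate $\widehat G$''. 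A union bound, followed by summation over $k$ (using that for the abelian group $\widehat G$ invariable generation coincides with generation, so the summed last probability equals $C(\widehat G)$), gives $C(G)\le\sum_{V\in A}\sum_{k\ge0}P^*_{G,V}(k)+C(\widehat G)$. Finally \cite[Corollary 2]{Pomerance} gives $C(\widehat G)\le d(\widehat G)+\sigma=\max_{V\in B}\delta_V(G)+\sigma$, as required.

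For Part (2), put $\overline G:=G/R_G(V)$, so that $\overline G\cong (L_V)_{\delta_V(G)}$ has socle $I_G(V)/R_G(V)$. The heart of the matter is the claim: if the images of $g_1,\dots,g_k$ invariably generate $G/U$ and $g_1,\dots,g_k$ do not satisfy the $V$-property in $G$, then $g_1,\dots,g_k$ invariably generate $G$. Granting this, for each $k$ the event that $g_1,\dots,g_k$ fail to invariably generate $G$ is contained in the union of the event that their images fail to invariably generate $G/U$ and the event that they satisfy the $V$-property in $G$; a union bound and summation over $k$ then give $C(G)\le C(G/U)+\sum_{k\ge0}P^*_{G,V}(k)$.

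To prove the claim, fix $x_1,\dots,x_k\in G$ and set $K=\langle g_1^{x_1},\dots,g_k^{x_k}\rangle$; invariable generation of $G/U$ gives $KU=G$, and since $\frat(G)=1$ and $I_G(V)=R_G(V)\times U$ with $U\neq 1$, Lemma \ref{sotto} reduces us to showing $KR_G(V)=G$. If not, $KR_G(V)$ lies in a maximal subgroup $M$ of $G$ containing $R_G(V)$, so $\overline M:=M/R_G(V)$ is maximal in $\overline G\cong (L_V)_{\delta_V(G)}$ and $g_i^{x_i}\in M$ for all $i$. If $\overline M$ contains the socle $I_G(V)/R_G(V)$, then $\core_G(M)\supseteq I_G(V)\supseteq U$, so $G/\core_G(M)$ is a quotient of $G/U$ and therefore the images of $g_1,\dots,g_k$ invariably generate it, contradicting $g_i^{x_i}\in M$ for all $i$. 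If $\overline M$ does not contain the socle then it supplements it, and by the structure of the crown-based power $(L_V)_{\delta_V(G)}$ the primitive quotient $G/\core_G(M)=\overline G/\core_{\overline G}(\overline M)$ has socle $G$-equivalent to $V$ (its socle is a nontrivial section of the isotypic crown $I_G(V)/R_G(V)$, all of whose composition factors are $G$-equivalent to $V$); hence $M\in\Omega_{G,V}$, and $g_i^{x_i}\in M$ for all $i$ then says exactly that $g_1,\dots,g_k$ satisfy the $V$-property in $G$, again a contradiction. So $KR_G(V)=G$, proving the claim. The bookkeeping here (the identity $C(G)=\sum_k P_k(G)$, and convergence of the series, already flagged before the statement) is routine; the two substantive points are the isolation of all central chief factors into the single abelian quotient $\widehat G$ in Part (1), and the use of Lemma \ref{sotto} together with the above dichotomy for maximal subgroups above $R_G(V)$ in Part (2). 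I expect that dichotomy — a statement about the maximal subgroups of $(L_V)_{\delta_V(G)}$ and which of them contain the crown — to be the part needing the most care.
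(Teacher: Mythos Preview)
Your argument is correct and follows essentially the same route as the paper. For Part~(1) the paper uses the full abelianisation $G/G'$ in place of your $\widehat G=G/K_0$, but since $d(G/G')=\max_{V\in B}\delta_V$ as well, both reach the same bound via Pomerance; for Part~(2) the paper observes directly (from Lemma~\ref{sotto}) that a maximal subgroup $M$ not containing $U$ must contain $R_G(V)$, hence $M\in\Omega_{G,V}$, which is exactly your ``$\overline M$ does not contain the socle'' case, while your other case ($\overline M\supseteq I_G(V)/R_G(V)$) is ruled out immediately since then $U\le M$ contradicts $KU=G$.
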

\begin{proof} By definition, $C(G)=\sum_{k=0}^{\infty} (1-P_{I}(G,k))$, where $P_{I}(G,k)$ denotes the probability that $k$ randomly chosen elements of $G$ invariably generate $G$. Let $P_{G,G/G'}(k)$ denote the probability that $k$ randomly chosen elements $g_1$, $\hdots$, $g_k$ of $G$ satisfy $\langle G'g_1,\hdots, G'g_k\rangle=G$. Then it is easy to see that 
	\begin{equation}\label{firstredeq}1-P_{I}(G,k)\le 1-P_{G,G/G'}(k)+\sum_{V\in A} P^{\ast}_{G,V}(k).\end{equation}
	Clearly $P_{G,G/G'}(k)$ is the probability that a random $k$-tuple of elements from $G/G'$ generates $G/G'$. Hence, $C(G/G')=\sum_{k=0}^{\infty} (1-P_{G,G/G'}(k))$ is at most $d(G/G')+\sigma$ by \cite[Corollary 2]{Pomerance} (here, for a group $X$, $d(X)$ denotes the minimal number of elements required to generate $X$). Since $d(G/G')\le \max_{V\in B}\delta_V$, it follows from (\ref{firstredeq}) that $C(G)\le\max_{V\in B}\delta_V+\sigma+\sum_{V\in A}\sum_{k=0}^{\infty} P^{\ast}_{G,V}(k)$, and Part (i) follows.
	
	Assume that $\frat(G)=1$, and let $U$ and $V$ be as in Lemma \ref{corona}. Then 
	\begin{equation}\label{nomoremax0}1-P_{I}(G,k)\le 1-P_{I}(G/U,k)+\sum_{W}P^*_{G,W}(k)\end{equation}
	where the sum in the second term goes over all complemented chief factors $W$ of $G$ not containing $U$. Now, if $M$ is a maximal subgroup of $G$ not containing $U$, then $M$ contains $R_{G}(V)$, by Lemma \ref{sotto}. Hence, $\core_G(M)$ contains $R_G(V)$, so $M\in \Omega_{G,V}$. Since $C(G)=\sum_{k=0}^{\infty} (1-P_{I}(G,k))$, Part (ii) now follows immediately from (\ref{nomoremax0}), and this completes the proof.
\end{proof}

The proof of Theorem \ref{CheboSolTheorem} will follow as a corollary of the proof of the next proposition. For a finite group $G$, and an abelian chief factor $V$ of $G$, set $H_{V}=H_{V}(G):=G/C_G(V)$, $m=m_V=m_{V}(G):=\dim_{\End_{G}(V)}\h(H_{V},V)$, and write $p=p_{V}=p_{V}(G)$ for the probability that a randomly chosen element $h$ of $H_{V}$ fixes a non zero vector in $V$. Also, let $\delta_{V}=\delta_V(G)$ be the number of
complemented factors in a chief series of $G$ which are $G$-isomorphic to $V$, and set $\theta_{V}=\theta_V(G)=0$ if $\delta_{V}=1$, and $\theta_{V}=1$ otherwise. Finally, let $q_{V}=q_{V}(G):=|\End_{G}(V)|$ and $n_{V}=n_{V}(G):=\dim_{\End_{G}(V)}{V}$.
\begin{prop}\label{CheboProp} Let $G$ be a finite group with trivial Frattini subgroup, and let $U$, $V$ and $R=R_G(V)$ be as in Lemma \ref{corona}. If $V$ is nonabelian, then set $\alpha_U:=\sum_{k=0}^{\infty} P^*_{G,V}(k)$. If $V$ is abelian, then write $q=q_V$, $n=n_V$ and $H=H_V$, $p=p_V$ and $m=m_V$. Also, set $\delta=\delta_{V}$ and define $\theta=0$ if $\delta=1,$ $\theta=1$ otherwise, and set
	
	$$\alpha_U:=\begin{cases}\sum_{0\leq i\leq \delta-1}\frac{q^\delta}{q^\delta-q^i}\leq \delta+\frac{q}{(q-1)^2}& \text { if }H=1,\\
	\min\left\{\left(\delta\cdot \theta+m+\frac{q}{q-1}\right)\frac{1}{p},\left(\lceil\frac{\delta\cdot \theta}{n}\rceil+\frac{q^n}{q^n-1}\right)|H|\right\}& \text { otherwise.}
	\end{cases}$$
	Then
	$$C(G)\le C(G/U)+\alpha_U.$$
\end{prop}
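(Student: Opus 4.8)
The plan is to apply Proposition \ref{firstred}(ii), which already gives $C(G)\le C(G/U)+\sum_{k=0}^{\infty}P^*_{G,V}(k)$, and then to bound the tail sum $\sum_{k=0}^{\infty}P^*_{G,V}(k)$ by the quantity $\alpha_U$ in the three cases ($V$ nonabelian; $V$ abelian with $H=1$; $V$ abelian with $H\ne 1$). The nonabelian case is immediate by definition, so the work is entirely in the abelian case. Here we use the structure of $G/R_G(V)\cong (L_V)_\delta$: since $V$ is abelian, $L_V=V\rtimes H$, and the relevant quotient in which the $V$-property is witnessed is (a quotient of) the crown-based power, so without loss we may work inside $G_0:=V^\delta\rtimes H$ with $H$ acting diagonally, exactly the setup of Proposition \ref{crucial}. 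Writing a random element of $G_0$ as $hw$ with $h\in H$, $w=(w_1,\dots,w_\delta)\in V^\delta$, Proposition \ref{crucial} tells us that if $h_1w_1,\dots,h_kw_k$ satisfy the $V$-property then each of the $\delta$ "column vectors" $r_j=(w_{1,j},\dots,w_{k,j})\in V^k$ lies in $W+D$, and moreover the collection $r_1,\dots,r_\delta$ is linearly dependent modulo $W+D$ (over $F=\End_H(V)$). The dimension of $V^k/(W+D)$ is controlled: $\dim_F W^\perp$-type counting gives $\dim_F(V^k/(W+D))$ at most $m+1$ contributions from $D$ (essentially $\dim_F H^1(H,V)\le m$ plus a bounded term), while each column being forced into $W+D$ is the event that cuts down the probability by roughly $p$ per element, since $y_i\in[h_i,V]$ forces $h_i$ to fix a nonzero vector precisely when $[h_i,V]\ne V$.

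The two bounds in the $H\ne 1$ case come from two different ways of estimating. For the first bound $(\delta\theta+m+\frac{q}{q-1})\frac 1p$: condition on the event $E$ that all $k$ chosen elements have image in $H$ fixing a nonzero vector (probability $\le p^k$ is the complement... more precisely we use that the "bad" constraint costs a factor $p$ each time via Proposition \ref{binomale}). The linear-dependence-modulo-$(W+D)$ condition forces the $\delta$ columns to satisfy $\delta\theta$ dependence relations beyond membership, membership in $D$ costs $m$ "free" coordinates, and there is a leftover geometric-series term $\sum_{i\ge 0}q^{-i}\cdot(\text{something})$ summing to $\frac{q}{q-1}$; applying Proposition \ref{binomale} with $p=p_V$ to absorb the per-element factor yields the $\frac 1p$ multiplier. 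For the second bound $(\lceil\frac{\delta\theta}{n}\rceil+\frac{q^n}{q^n-1})|H|$: here instead we bound crudely by noting $|\widetilde M|/|G|\le |H|^{-1}\cdot(\text{something})$ — one uses that a maximal subgroup $M\in\Omega_V$ has index at least $|H|$ inside the relevant section, so $\sum_k P^*_{G,V}(k)\le |H|\cdot(\text{number of relevant }M\text{-classes plus geometric correction})$, and the count of $M$'s is governed by $\lceil\delta\theta/n\rceil$ (the number of $n$-dimensional "directions" needed) plus the geometric term $\frac{q^n}{q^n-1}=\sum_{i\ge 0}q^{-ni}$. Finally, the $H=1$ case reduces to invariable generation of the elementary abelian group $V^\delta$ by cosets, which is just ordinary generation of $\FF_q^\delta$, and $\sum_{k}(1-\prob(k\text{ vectors span }\FF_q^\delta))=\sum_{0\le i\le\delta-1}\frac{q^\delta}{q^\delta-q^i}$ is the classical formula; the stated upper bound $\delta+\frac{q}{(q-1)^2}$ follows since each summand is $1+\frac{q^i}{q^\delta-q^i}\le 1+q^{i-\delta}\cdot\frac{q^\delta}{q^\delta-q^{\delta-1}}$-type estimate, bounding the excess by a convergent geometric series.

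The main obstacle is the careful bookkeeping in the abelian $H\ne 1$ case: one must correctly identify $W+D$ inside $V^k$, verify that the codimension counting really produces the constants $\delta\theta$, $m$, and the geometric tails $\frac{q}{q-1}$ and $\frac{q^n}{q^n-1}$, and — most delicately — justify the passage from "the $r_j$ are dependent mod $W+D$" (Proposition \ref{crucial}) to an actual probability estimate on $k$-tuples, which requires separating the choice of $h_i\in H$ (contributing the factor handled by Proposition \ref{binomale} via $p_V$, or by $|H|$ in the crude bound) from the choice of $w_i\in V^\delta$ (a linear-algebra event over $F$). The interplay between the diagonal action making all $\delta$ copies "see" the same $W$, and the single global dependence relation among the $\delta$ columns, is what produces the $\delta\theta$ rather than $\delta$, and getting this exactly right is the crux; everything else is either a direct appeal to a cited/earlier result or a routine geometric-series summation.
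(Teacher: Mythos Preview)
Your overall architecture is correct and matches the paper: reduce via Proposition~\ref{firstred}(ii), pass to $G/R_G(V)\cong V^\delta\rtimes H$, invoke Proposition~\ref{crucial}, and then handle the three cases. The $H=1$ case and the nonabelian case are fine.

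For the first bound in the $H\neq 1$ case your sketch is in the right direction but omits the key step. The paper does \emph{not} condition on ``all $k$ elements fix a nonzero vector''; it conditions on the number $l$ of indices $i$ with $C_V(h_i)\neq 0$, and then invokes an external lemma (\cite[Lemma~7]{ACheboGen}) asserting that under this condition $W+D$ has $F$-codimension at least $l-m$ in $V^k$. This codimension bound is what converts the linear-dependence event into the estimate $\min\{1,q^{\delta\theta+m-l}\}$, after which the double sum over $k$ and $l$ is handled exactly by Proposition~\ref{binomale} with success probability $p$. Your description of ``$m+1$ contributions from $D$'' is too vague to substitute for this; you need the precise statement that $\dim_F W=\sum_i\dim_F[h_i,V]=nk-\#\{i:C_V(h_i)\neq 0\}$ and $\dim_F D\le n+m$ (modulo $W$), which is exactly the cited lemma.

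For the second bound your proposed mechanism is genuinely different from the paper's and, as stated, does not work. You suggest bounding $P^*_{G,V}(k)$ by $(\text{number of }M\text{-classes})\cdot(|\widetilde M|/|G|)^k$ and controlling the number of classes by $\lceil\delta\theta/n\rceil$. But the number of $G$-conjugacy classes of maximal subgroups in $\Omega_{G,V}$ is not bounded by $\lceil\delta\theta/n\rceil$ in general (there are many complements in $V^\delta\rtimes H$, essentially parametrised by $H^1$ and choices of hyperplane), and there is no clean bound of the form $|\widetilde M|/|G|\le 1-1/|H|$ available. The paper instead stays inside the same linear-algebra framework: it conditions on the number $l$ of indices with $h_i=1$, projects to those coordinates $J$, observes that $W_J+D_J=0$ there (since $[1,V]=0$ and $\zeta(1)=0$), and bounds the dependence probability in $V^l$ by $\min\{1,q^{\delta\theta-nl}\}$. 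The sum is then handled by Proposition~\ref{binomale} with success probability $p'=1/|H|$, which is where the factor $|H|$ actually comes from. You should replace your maximal-subgroup-counting heuristic with this coordinate-restriction argument.
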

\begin{proof} By Proposition \ref{firstred} Part (ii), we have
	\begin{equation}\label{nomoremax} C(G)\le C(G/U)+\sum_{k=0}^{\infty} P^*_{G,V}(k).\end{equation}
	
	Thus, we just need to prove that $\sum_{k=0}^{\infty} P^{\ast}_{G,V}(k)\le \alpha_U$. Therefore, we may assume that $V$ is abelian. Writing bars to denote reduction modulo $R_G(V)$, note that if $M$ is a maximal subgroup of $G$ with $M\in \Omega_{G,V}$, then $R_G(V)\le M$ and $\overline{M}\in \Omega_{\overline{G},V}$. Hence, $P^{\ast}_{G,V}(k)\le P^{\ast}_{\ol{G},V}(k)$, so we may assume that $R_G(V)=1$. Thus, $G\cong V^{\delta}\rtimes H$, where $H$ acts faithfully and irreducibly on $V$, and diagonally on $V^{\delta}$.
	
	Suppose first that $|H|=1$. Then $G=V^{\delta}\cong (C_r)^{\delta}$, for some prime $r$, and $P^*_{G,V}(k)$ is the probability that $k$ randomly chosen elements of $G$ fail to generate $G$. Hence, $\sum_{k=0}^{\infty} P^*_{G,V}(k)$ is the expected number of random elements to generate $(C_r)^{\delta}$, which is well known to be
	$$\sum_{i=0}^{\delta-1}\frac{r^{\delta}}{r^{\delta}-r^{i}}.$$
	See, for instance, \cite[top of page 193]{Pomerance}.
	
	So we may assume that $|H|>1$. Let $F=\End_HV$, so that $|F|=q$, $\dim_FV=n$, and $|V|=q^n$. Fix elements $x_{1}$, $x_{2}$ ,$\hdots$, $x_{k}$ in $G$, and for $i\in\{1,\dots,k\},$ let $x_i=w_ih_i$ with $w_i\in V^\delta$ and $h_i\in H.$ For $t\in \{1,\dots,\delta\}$ let
	$$\begin{aligned}r_t&=(\pi_t(w_1),\dots,\pi_t(w_k))\in V^{k}.\end{aligned}$$
	where $\pi_{t}$ denotes projection onto the $t$-th direct factor of $V^{\delta}$.
	Moreover let
	\begin{align*}W &:=\left\{(u_{1},u_{2},\hdots,u_{k})\text{ : }u_{i}\in [h_{i},V]\text{ for }1\le i\le k\right\}\text{, and}\\
	D &:= \left\{(\zeta(h_1),\zeta(h_2),\hdots,\zeta(h_k) )\in V^{k}\text{ : }\zeta\in\der(H,V)\right\}.\end{align*}
	By Proposition \ref{crucial}, $P^{\ast}_{G,V}(k)$ is at most the probability that
	$r_1,\dots,r_\delta$ are linearly dependent modulo $W+D$. Also, for an $f$-tuple $J:=(j_{1},j_{2},\hdots,j_{f})$ of distinct elements $j_i$ of $\left\{1,\hdots,k\right\}$, set 
	$$r_{t,J}:=(\pi_t(w_{j_1}),\pi_t(w_{j_2}),\hdots,\pi_t(w_{j_f}))\in V^{f}$$
	for $t\in \left\{1,\hdots,\delta\right\}$, and set 
	\begin{align*}W_{J} &:=\left\{(u_{j_1},u_{j_2},\hdots,u_{j_f})\in V^{f}\text{ : }u_{i}\in [h_{j_i},V]\text{ for }1\le i\le f\right\}\text{, and}\\
	D_{J} &:= \left\{(\zeta(h_{j_1}),\zeta(h_{j_2}),\hdots,\zeta(h_{j_f}) )\in V^{f}\text{ : }\zeta\in\der(H,V)\right\}.\end{align*}
	Notice that: $(\ast)$ If $J$ is fixed and $r_1,\dots,r_\delta$ are $F$-linearly dependent modulo $W+D$, then the vectors $r_{1,J},\dots,r_{\delta,J}$ of $V^{f}$ are $F$-linearly dependent modulo $W_J+D_J$.
	
	We will prove first that 
	\begin{align}\sum_{k=0}^{\infty} P^{\ast}_{G,V}(k)\le (\deltat +m+c_{V})\frac{1}{p},\end{align}
	where $c_V$ is as in the statement of Theorem 2. To this end, let $\Delta_{l}$ be the subset of $H^k$ consisting of the $k$-tuples $(h_1,\dots,h_k)$ with the property that $C_V(h_i)\neq 0$ for precisely $l$ different choices of $i\in\{1,\dots,k\}.$  If $(h_1,\dots,h_k)\in \Delta_{l},$ then, by \cite[Lemma 7]{ACheboGen}, $W+D$ is a subspace of $V^{k}\cong F^{nk}$ of codimension at least $l-m$: so the probability that $r_{1},\dots,r_{\delta}$ are $F$-linearly dependent modulo $W+D$ is at most
	$$\begin{aligned}p_{l}&=1-\left(\frac{q^{nk}-q^{nk-l+m}}{q^{nk}}\right)
	\cdots\left(\frac{q^{nk}-q^{nk-l+m+\delta-1}}{q^{nk}}\right)\\
	&=1-\left(1-\frac{1}{q^{l-m}}\right)\dots \left(1-\frac{q^{\delta-1}}{q^{l-m}}\right)\\&\leq \min\left\{1,\left(\frac{q^\delta-1}{q-1}\right)\frac{1}{q^{l-m}}\right\}\le \min\left\{1,1/q^{l-m-\deltat}\right\}.
	\end{aligned}$$
	Hence, we have 
	$$\begin{aligned}\sum_{k=0}^{\infty} P^{\ast}_{G,V}(k) &\le \sum_{k=0}^{\infty}\sum_{l=0}^{k} P(B(k,p)=l)\min\left\{1,q^{\deltat+m-l}\right\}\\
&\le \sum_{k=0}^{\infty} P(B(k,p)<\deltat+m )+\sum_{k=0}^{\infty}\sum_{l=\deltat+m}^{k} P(B(k,p)=l)q^{\deltat+m-l}\\
	&\le \sum_{k=0}^{\infty} P(B(k,p)<\deltat+m )+\sum_{l=0}^{\infty}q^{-l}\!\!\!\!\!\sum_{k=l+\deltat+m}^{\infty} \!\!\!\!\!\!P(B(k,p)=l+\deltat+m)\\
	&\le \frac{\deltat+m+c_{V}}{p} \end{aligned}$$
	where $c_{V}=\frac{q}{q-1}$. Note that the last step above follows from Proposition \ref{binomale}. 
	
	Thus, all that remains is to show that
	\begin{align}\sum_{k=0}^{\infty} P^{\ast}_{G,V}(k)\le \left(\left\lceil\frac{\deltat}{n}\right\rceil+\frac{q^n}{q^{n}-1}\right)|H|.\end{align}
	For this, we define $\Omega_{l}$ to be the subset of $H^k$ consisting of the $k$-tuples $(h_1,\dots,h_k)$ with the property that $h_i=1$ for precisely $l$ different choices of $i\in \{1,\dots,k\}.$ Suppose that $(h_1,\hdots,h_k) \in \Omega_l$, and set $J:=(j_{1},j_{2},\hdots,j_{l})$, where $j_{1}<j_{2}<\hdots<j_{l}$ and $\left\{j_{1},j_{2},\hdots,j_{l}\right\}=\left\{i\text{ }|\text{ }1\le i\le k, h_i=1\right\}$. Then, by $(\ast)$, the probability $p_{l}'$ that $r_{1}$, $r_{2}$, $\hdots$, $r_{\delta}$ are $F$-linearly dependent modulo $W+D$ is at most the probability that the vectors $r_{1,J}$, $r_{2,J}$, $\hdots$, $r_{\delta,J}\in V^{l}$ are $F$-linearly dependent modulo $W_{J}+D_{J}$. But $W_{J}+D_{J}=0$, by the definition of $J$. Thus we have
	$$\begin{aligned}p_{l}'&\le 1-\left(\frac{q^{nl}-1}{q^{nl}}\right)
	\cdots\left(\frac{q^{nl}-q^{nl-\delta-1}}{q^{nl}}\right)\\
	&=1-\left(1-\frac{1}{q^{nl}}\right)\dots \left(1-\frac{q^{\delta-1}}{q^{nl}}\right)\leq \min\left\{1,\left(\frac{q^\delta-1}{q-1}\right)\frac{1}{q^{nl}}\right\}\le \min\left\{1,\frac{1}{
		q^{nl-\deltat}}\right\}.
	\end{aligned}$$
	Hence, if $\alpha:=\lceil \frac{\deltat}{n}\rceil$, and $p'=1/|H|$ is the probability that a randomly chosen element of $H$ is the identity, then we have
	\begin{align*} \sum_{k=0}^{\infty} P^{\ast}_{G,V}(k) &\le \sum_{k=0}^{\infty} P(B(k,p')<\alpha )+\sum_{k=0}^{\infty}\sum_{l=\alpha}^{k} P(B(k,p')=l)q^{\deltat-nl}\\
	&\le \sum_{k=0}^{\infty} P(B(k,p')<\alpha )+\sum_{l=0}^{\infty}q^{-nl-n\alpha+\deltat}\sum_{k=l+\alpha}^{\infty} P(B(k,p')=l+\alpha)\\
	&\le \sum_{k=0}^{\infty} P(B(k,p')<\alpha )+\sum_{l=0}^{\infty}q^{-nl}\sum_{k=l+\alpha}^{\infty} P(B(k,p')=l+\alpha)\\
	&\le \frac{1}{p'}\left(\alpha +\frac{q^{n}}{q^{n}-1}\right) \end{align*}
	Note that the last step above again follows from Proposition \ref{binomale}. Since $p'=1/|H|$, (3.5) follows, whence the result.
\end{proof}

We are now ready to prove Theorem \ref{CheboSolTheorem}.
\begin{proof}[Proof of Theorem \ref{CheboSolTheorem}] By Proposition \ref{firstred} Part (i), we have
	$$C(G)\le\max_{V\in B}\delta_V+\sigma+\sum_{k=0}^{\infty}\sum_{V\in A} P^{\ast}_{G,V}(k)$$
	Thus, it will suffice to prove that 
	\begin{align} \sum_{k=0}^{\infty} P^{\ast}_{G,V}(k)\le \min\left\{(\delta_{V}+c_{V})q_{V}^{n_{V}},\left(\left\lceil\frac{\delta_{V}}{n_V}\right\rceil+\frac{q_{V}^{n_{v}}}{q_{V}^{n_{V}}-1}\right)|H_{V}|\right\}\end{align} for each non-central complemented chief factor $V$ of $G$.
	
	However, since $\h(H,V)=0$ by \cite[Lemma 1]{st}, and since $p_{V}\le |H_{v}|/|H|\le 1/|V|$ (for any non-zero vector $v\in V$), this follows immediately from the proof of Proposition \ref{CheboProp}.\end{proof}

\begin{cor}\label{DelAstCor} Let $G$ be a finite soluble group, and let $A$ and $B$ be as in Theorem \ref{CheboSolTheorem}. Then 
	$$C(G)\le d(G)\sum_{V\in A}\left(1+\frac{{q_V^{n_{V}}}|H_V|}{{q_V^{n_{V}}}-1}\right)+\sigma.$$\end{cor} 
\begin{proof} For $V\in A\cup B$, set $\gamma_{V}:=\lceil\delta_{V}/n_{V}\rceil$, and $p'_{V}=1/|H_{V}|$. Note also that $n_V=|H_V|=1$ when $V\in B$. Arguing as in the last paragraph of the proof of Proposition \ref{CheboProp}, we have 
	\begin{align*}C(G) &\le \sum_{V\in A}\sum_{k=0}^{\infty}\sum_{l=0}^{k} \min\{q_V^{-n_{V}l+\delta_V},1 \} P(B(k,p'_V)=l)+ \max_{V\in B}{\del_{V}}\!+\sigma\\
	&\le\sum_{V\in A}\sum_{k=0}^{\infty}\!P(B(k,p'_V)\!<\!\gamma_{V})\!+\!\!\sum_{V\in A}\sum_{l=0}^{\infty}q_V^{-n_{V}l}\!\!\!\!\!\sum_{k=l+\gamma_V}^{\infty}\!\!\!\!\!P(B(k,p'_V)=l+\gamma_{V} )+\\
&\quad \max_{V\in B}{\del_{V}}\!+\sigma\\
	&\le \sum_{V\in A} \gamma_{V}/p'_{V} +\sum_{V\in A}\frac{{q_{V}}^{n_{V}}}{{p'_{V}(q_{V}}^{n_{V}}-1)}+\max_{V\in B}{\del_{V}}+\sigma\\
	&\le \left(\max_{V\in A\cup B}\gamma_{V}\right)\sum_{V\in A}\left(1+\frac{{q_{V}}^{n_{V}}}{{q_{V}}^{n_{V}-1}}\right) |H_{V}|+\sigma. \end{align*}
	We remark that the third inequality above follows from Proposition \ref{binomale}. Finally, \cite[Theorem 1.4 and paragraph after the proof of Theorem 2.7]{ADV} imply that $d(G)=\max_{V\in A\cup B}\left\{1+a_{V}+\left\lfloor\frac{{\delta}_V-1}{n_V}\right\rfloor\right\}$, where $a_{V}=0$ if $V\in B$, and $a_{V}=1$ otherwise. In particular, $d(G)\ge \max_{V\in A\cup B}{\gamma_{V}}$, and the result follows. \end{proof}

\section{Proof of Theorem \ref{KZConjecture} Part (i)} 
Before proceeding to the proof of Part (i) of Theorem \ref{KZConjecture}, we require the following result, which follows immediately from the arguments used in \cite[Proof of Proposition 10]{ACheboGen}.
\begin{prop}\label{pH}{\cite[Proof of Proposition 10]{ACheboGen}} Let $H$ be a finite group acting faithfully and irreducibly on an elementary abelian group $V$, and denote by $p$ the probability that a randomly chosen element $h$ of $H$ centralises a non zero vector of $V$. Also, write $m:=\dim_{\End_{H}(V)}\h(H,V)$. Assume that $\h(H,V)$ is nontrivial and that $|H|\geq |V|$. Then there exists an absolute constant $C$ such that $p|H|\ge 2(m+1)^{2}$ if $|H|\ge C$.\end{prop}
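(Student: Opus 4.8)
The plan is to reduce the statement to a counting estimate over conjugacy classes of $H$, exactly as in the cited proof, and then to isolate the genuine content: a lower bound on $p|H|$ in terms of $m$. Recall that if $C_1, \ldots, C_s$ are the conjugacy classes of elements $h \in H$ with $C_V(h) \neq 0$, then $p|H| = \sum_{i=1}^{s}|C_i|$, so it suffices to bound this sum below by $2(m+1)^2$ once $|H|$ is large. The first step is to recall from \cite[Proof of Proposition 10]{ACheboGen} the key inequality relating $m$ to the structure of $H$ acting on $V$: since $\h(H,V) \neq 0$, one has (by a result of Aschbacher--Guralnick, or the elementary bound used there) that $m$ is controlled by $\log_q |V| = n$, or more precisely by the dimension data of $V$, so that $m$ cannot be large unless $|V|$ (and hence, under the hypothesis $|H| \geq |V|$, also $|H|$) is large. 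I would quote this dependence verbatim rather than reprove it.

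The second step is the main point: to show $p|H| \geq 2(m+1)^2$ for $|H|$ large. Here I would split into cases according to whether $m$ is bounded or not. If $m$ is bounded by an absolute constant $m_0$, then $2(m+1)^2 \leq 2(m_0+1)^2$ is an absolute constant, and since $p|H| \geq 1$ (the identity always centralises $V$) is too weak, I instead use that $p|H|$ counts at least the identity class together with all classes of $p'$-elements acting with a nonzero fixed point; a crude bound shows $p|H| \to \infty$ as $|H| \to \infty$ for faithful irreducible $H$ (e.g.\ via the fact that the proportion of fixed-point-free elements is bounded away from $1$ only in degenerate cases, or more simply by bounding the number of regular semisimple-type elements), so for $|H| \geq C_1$ we get $p|H| \geq 2(m_0+1)^2$. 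If instead $m$ is large, then by the dependence recalled in Step 1, $n = \dim V$ is large, hence $|V| = q^n$ is large, hence $|H| \geq |V|$ is large; now one uses the quantitative estimate from \cite{ACheboGen} that $p|H| \geq c \cdot |V|^{\eta}$ for suitable constants (coming from counting elements in a fixed parabolic or point stabiliser), together with the polynomial-versus-logarithmic comparison $|V|^{\eta} \gg (\log_q|V|)^2 \gg m^2$, to conclude. The absolute constant $C$ is then the maximum of the thresholds arising in the two cases.

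The third step is bookkeeping: one must check that the two case thresholds, and the constant $C$ hidden in the comparison $p|H| \geq c|V|^\eta$, are genuinely absolute (independent of $H$, $V$, $q$, $p$), which is where the faithfulness and irreducibility hypotheses are essential — they force $|H|$ to grow with $\dim V$ in a uniform way. I would also note at the outset that the hypothesis $|H| \geq |V|$ is used only to transfer "large $m$" to "large $|H|$", and that without it the statement can fail.

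\medskip
\noindent\textbf{Main obstacle.} The delicate part is the large-$m$ case: one needs a clean quantitative lower bound $p|H| \geq c|V|^{\eta}$ with absolute $c, \eta > 0$, valid for all faithful irreducible linear groups with $\h(H,V) \neq 0$. This is exactly the technical heart of \cite[Proposition 10]{ACheboGen}; since the statement here is explicitly attributed to that proof, the intended argument is presumably to lift that estimate directly. The only real work is to verify that the constants extracted are absolute and to handle the boundary between the "bounded $m$" and "unbounded $m$" regimes so that a single constant $C$ suffices.
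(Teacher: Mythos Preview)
The paper does not give its own proof of this proposition: it is stated with the remark that it ``follows immediately from the arguments used in \cite[Proof of Proposition 10]{ACheboGen}'' and is then used as a black box. So there is no in-paper argument to compare your proposal against.

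Evaluating your sketch on its own merits, the overall split on ``$m$ bounded vs.\ $m$ unbounded'' is reasonable, but both branches have real gaps. In the bounded-$m$ branch you assert that $p|H|\to\infty$ as $|H|\to\infty$ for faithful irreducible $H$, justified only by ``a crude bound'' or ``regular semisimple-type elements''. This is false without the hypothesis $\h(H,V)\neq 0$ (a Singer cycle of order $q^n-1$ has $p|H|=1$ for all $n$), and nowhere in that branch do you actually invoke $\h(H,V)\neq 0$; you need to explain concretely how nonvanishing cohomology forces many elements with fixed vectors. In the unbounded-$m$ branch you posit an inequality $p|H|\ge c|V|^{\eta}$ with absolute $c,\eta>0$, concede that this is ``exactly the technical heart'' of the cited reference, and then propose to lift it verbatim. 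But that inequality \emph{is} the content of the proposition, so at this point the argument is circular: you are assuming what you set out to prove. The Aschbacher--Guralnick input you mention does give $m<n$ (since $|\h(H,V)|<|V|$), which is the right way to tie $m$ to $\dim V$, but converting this into $p|H|\ge 2(m+1)^2$ still requires a genuine lower bound on $p|H|$ that your proposal never supplies independently of the citation.
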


\begin{proof}[Proof of Theorem \ref{KZConjecture} Part (i)] Since $C(G)=C(G/\frat(G))$, we may assume that $\frat(G)=1$. Thus, Proposition \ref{CheboProp} applies: adopting the same notation as used therein, we have 
	\begin{equation}\label{main}
	C(G)\leq C(G/U)+\alpha_U.
	\end{equation} 
	
	Using (\ref{main}), the proof of the theorem reduces to proving that
	\begin{equation}\label{red}\alpha_U\leq (1+\beta_U)\sqrt{|{G}|}\end{equation}
	where $\beta_{U}\to 0$ as $|U|\to \infty.$ Indeed, suppose that (\ref{red}) holds, fix $\epsilon>0$, and suppose that Theorem \ref{KZConjecture} holds for groups of order less than $|G|$. Then since $|U|>1$, there exists a constant $c_{\epsilon}$ such that $C(G/U)\le (1+\epsilon)\sqrt{|G/U|}+c_{\epsilon}$. Hence, by (\ref{main}) and (\ref{red}) we have $C(G)\le (1+\beta_{U}+\frac{1+\epsilon}{\sqrt{|U|}})\sqrt{|G|}+c_{\epsilon}$. It is now clear that by choosing $|U|$ to be large enough, we have $C(G)\le (1+\epsilon)\sqrt{|G|}+c_{\epsilon}$, as needed. 
	
	Assume first that $U$ is nonabelian. By \cite[Proof of Lemma 13]{ACheboGen}, there exist absolute constants $c_{1}$ and $c_{2}$ such that 
	$$P^*_{G,V}(k) \le \min\left\{1,c_{1}\sqrt{|{G}|^{3}}(1-c_{2}/\log{|{{G}}|})^{k}\right\}.$$
	Also, there exists a constant $c_{3}$ such that if $k\ge c_{3}(\log{|{G}|})^{2}$, then\\
	$c_{1}\sqrt{|{G}|^{3}}(1-c_{2}/\log{|{G}|})^{k}$ tends to $0$ as ${|G|}$ tends to $\infty$. It follows that
	\begin{align*} \alpha_U &= \sum_{k=0}^{\infty}P^*_{G,V}(k)\\
	&\le \lceil c_{3}(\log{|{G}|})^{2}\rceil +c_{1}\sqrt{|{G}|^{3}}(1-c_{2}/\log{|{G}|})^{\lceil c_{3}(\log{|{G}|})^{2}\rceil}\sum_{k=0}^{\infty}(1-c_{2}/\log{|{G}|})^{k}\\
	&= \lceil c_{3}(\log{|{G}|})^{2}\rceil +\frac{c_{1}}{c_{2}}\sqrt{|{G}|^{3}}\log{|G|}(1-c_{2}/\log{|{G}|})^{\lceil c_{3}(\log{|{G}|})^{2}\rceil}\end{align*}
	and (\ref{red}) holds.
	
	So we may assume that $U$ is abelian, and hence $|{G}|\ge |V|^{\delta}|H|$. The inequality (\ref{red}) then follows easily from the definition of $\alpha_U$, except when $\delta=1$ and $|H|\geq |V|.$ Indeed, if $|H|\le |V|$ and $\delta=1$, then $\frac{q^n}{q^{n}-1}\to 1$ as $|U|=q^{n}\to\infty$; if $|H|\le |V|$ and $\delta>1$, then 
	$$\left(\left\lceil \frac{\delta}{n}\right\rceil+\frac{q^{n}}{q^{n}-1}\right)|H|\le \frac{\left\lceil\frac{ \delta}{n}\right\rceil+\frac{q^n}{q^{n}-1}}{|V|^{\frac{\delta-1}{2}}}\sqrt{|G|}$$ 
	which clearly gives us what we need, since $|U|=|V|^{\delta}$ is tending to $\infty$. The other cases are similar.
	
	So assume that $\delta=1$ and $|H|\geq |V|$. We distinguish two cases:\begin{enumerate}[(1)]
		\item $m\neq 0$ and $|V|\le |H|\le (m+1)^{2}|V|$. Denote by $p$ the probability that
		a randomly chosen element $h$ of $H$ centralizes a non-zero vector of $V$: By Proposition \ref{pH}, there exists an absolute constant $C$ such that $p|H|\ge 2(m+1)^{2}$ if $|H|\ge C$. Thus,
		$$\alpha_U\le \left(m+\frac{q}{q-1}\right)\frac{1}{p}\le (m+2)\frac{|H|}{2(m+1)^{2}}\le \frac{|H|}{m+1}\le \sqrt{|H||V|}$$
		if $|H|\ge C$, from which (\ref{red}) follows.
		
		\item $|H| \ge |V|(m + 1)^2$. We remark first that, for any fixed nonzero vector $v$ in $V$, we have $p\ge \frac{|H_v|}{|H|}$, where $H_{v}$ denotes the stabiliser of $v$ in $H$. If $H$ is not a transitive linear group, then there is an
		orbit $\Omega$ for the action of $H$ on $V\backslash\left\{0\right\}$ with $|\Omega|\le q^{n}/2$. Choose $v\in \Omega$: we have
		$$\frac{1}{p}\le \frac{|H|}{|H_v|}\le \frac{q^n}{2},$$
		hence
		$$\alpha_U\le \frac{m+2}{p}\le (m+1)q^{n}\le \sqrt{|H||V|}.$$
		We remain with the case when $H$ is a transitive linear group. There
		are four infinite families:\begin{enumerate}[(a)]
			\item $H\le \Gamma L(1,q^{n})$;
			\item $SL(a,r)\unlhd H$, where $r^{a}=q^{n}$;
			\item $Sp(2a,r)\unlhd H$, where $a\ge 2$ and $r^{2a}=q^{n}$;
			\item $G_2(r)\unlhd H$, where $q$ is even, and $q^{n}=r^{6}$.\end{enumerate}
		Furthermore, $H$ and $m$ are exhibited in \cite[Table 7.3]{Cameron}: in each case, we have $m\le 1$. Furthermore, we have $|H|=(q^{n}-1)\rho$, where $\rho$ is the order of a point stabiliser. Hence, if $\rho\ge 9$, then
		$$\alpha_{U}\le \frac{m+2}{p}\le \frac{3}{p}\le 3|V|\le \sqrt{|H||V|}.$$
		So we may assume that $\rho\le 8$. Suppose first that (a) holds. Then $H$ is soluble, so $m=0$. Also, $\rho=|H_{v}|\le 8$ implies that $n\le 8$. Hence, as $q^{n}\le q^{8}$ approaches $\infty$, $\frac{q}{q-1}$ approaches $1$, and (\ref{red}) follows since
		$$\alpha_U\le \frac{q}{q-1}|V|\le \frac{q}{q-1}\sqrt{|H||V|}.$$
		So we may assume that (a) does not hold. In particular, if (b) or (c) holds then $a\ge 2$. It follows (in either of the cases (b), (c) or (d)) that if $q^n$ is large enough, then $|H|\ge 9q^{n}$,
		and so
		$$\alpha_{U}\le \frac{(m+2)}{p}\le 3q^{n}\le \sqrt{|H||V|}.$$
		This gives us what we need, and completes the proof.\qedhere\end{enumerate}\end{proof}

\section{Proof of Theorem \ref{KZConjecture} Part (ii)}
In this section, we prove Part (ii) of Theorem \ref{KZConjecture} in a number of steps. The first is as follows:
\begin{lemma}\label{tecn}Let $G$ be a finite soluble group with trivial Frattini subgroup, and let $U$ and $V$ be as in Lemma \ref{corona}. Assume that $V$ is abelian and non-central in $G$, and let $H=H_V$. Then
	$$\frac{\alpha_U}{|G|^{1/2}}< \frac{5}{3}\left(\frac{|U|^{1/2}-1}{|U|^{1/2}}\right)$$
	except  when $|H|<|V|$ and one of the following cases occur:
	\begin{enumerate}
		\item $\delta=2$, $q^n=4$ and $|R_G(V)|=1.$
		\item $\delta=2$, $q^n=3$ and $|R_G(V)|\leq 2.$
		\item $\delta=1$, $4\leq q^n \leq 7$ and $|R_G(V)|=1.$
		\item $\delta=1$, $q^n=3$ and $|R_G(V)|\leq 3.$
	\end{enumerate}
\end{lemma}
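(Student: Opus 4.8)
The plan is to use Proposition \ref{CheboProp}, which gives us an explicit formula for $\alpha_U$ in the abelian, non-central case (so $\h(H,V)=0$ by \cite[Lemma 1]{st}, hence $m=0$, and $H\neq 1$). We then have the two competing bounds
$$\alpha_U\le\min\left\{\left(\delta\cdot\theta+\frac{q}{q-1}\right)\frac{1}{p},\ \left(\left\lceil\frac{\delta\cdot\theta}{n}\right\rceil+\frac{q^n}{q^n-1}\right)|H|\right\}.$$
Since $|G|=|R_G(V)|\,|V|^{\delta}|H|=|R_G(V)|\,q^{n\delta}|H|$ and $|U|=|G|/|G/U|$, the denominator factor $|R_G(V)|$ will only help, so the worst case is $|R_G(V)|=1$; I would first record that reduction (or simply keep $|R_G(V)|$ in the inequalities and observe it strengthens them). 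The goal is then to show $\alpha_U/|G|^{1/2}<\tfrac53\bigl(1-|U|^{-1/2}\bigr)$ in all remaining cases.

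The argument splits on $|H|$ versus $|V|=q^n$. When $|H|\ge|V|$, use the first bound together with $p\ge|H_v|/|H|\ge 1/|V|$ (for any nonzero $v$), giving $\alpha_U\le(\delta\theta+q/(q-1))|V|\le(\delta\theta+2)|V|$; since $|G|^{1/2}\ge|V|^{\delta/2}|H|^{1/2}\ge|V|^{(\delta+1)/2}$, the ratio is at most $(\delta\theta+2)/|V|^{(\delta-1)/2}$, and one checks this is $<\tfrac53(1-|U|^{-1/2})$ whenever $|V|\ge 4$ and $\delta\ge 1$ (for $\delta=1$ one gets $\le(0+2)/1=2$ which is too big, but then $\theta=0$ and the bound is $q/(q-1)\cdot|V|/|V|^{1/2}\cdot|V|^{-1/2}\to$ manageable — here one uses $q/(q-1)\le 2$ and that $|H|\ge|V|$ forces $|G|^{1/2}\ge|V|$, so the ratio is $\le q/(q-1)\le 2$, and pushing $|U|\to\infty$ via the $(1-|U|^{-1/2})$ factor and handling small $q^n$ separately gives the claim, with no exceptional cases surviving when $|H|\ge|V|$). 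The bulk of the case analysis is when $|H|<|V|$. Here I would use the \emph{second} bound $\alpha_U\le(\lceil\delta\theta/n\rceil+q^n/(q^n-1))|H|$ and $|G|^{1/2}=|R_G(V)|^{1/2}q^{n\delta/2}|H|^{1/2}\ge q^{n\delta/2}|H|^{1/2}$, so
$$\frac{\alpha_U}{|G|^{1/2}}\le\frac{\lceil\delta\theta/n\rceil+\frac{q^n}{q^n-1}}{q^{n\delta/2}}\,|H|^{1/2}<\frac{\lceil\delta\theta/n\rceil+\frac{q^n}{q^n-1}}{q^{n\delta/2}}\,q^{n/2}=\frac{\lceil\delta\theta/n\rceil+\frac{q^n}{q^n-1}}{q^{n(\delta-1)/2}}.$$
For $\delta\ge 3$ the denominator $q^{n(\delta-1)/2}\ge q^n\ge 2^n\ge 2$ grows fast enough that the ratio is comfortably below $\tfrac53$ (and one still has to compare with $\tfrac53(1-|U|^{-1/2})$, but $|U|=|G|/|G/U|$ is large here); for $\delta=2$ the bound is $(\lceil 2/n\rceil+q^n/(q^n-1))/q^{n/2}$, which is $<\tfrac53(1-|U|^{-1/2})$ unless $q^n$ is small — the surviving cases being exactly $q^n\in\{2,3,4\}$, which after also checking the $(1-|U|^{-1/2})$ factor against the possible values of $|R_G(V)|$ yields exceptional cases (1) and (2) (the case $q^n=2$ forces $|H|<2$, i.e. $H=1$, which is excluded since $V$ is non-central, so it drops out); for $\delta=1$ we have $\theta=0$, the bound is $q^n/((q^n-1)q^0)=q^n/(q^n-1)$, wait — more precisely the ratio is $(0+q^n/(q^n-1))/q^{0}=q^n/(q^n-1)$ times $|H|^{1/2}/q^{n/2}<q^n/(q^n-1)$, which must be compared with $\tfrac53(1-|U|^{-1/2})$; this fails only for small $q^n$, giving exceptional cases (3) and (4) after tracking $|R_G(V)|$.

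The main obstacle will be the bookkeeping at the boundary: the right-hand side $\tfrac53(1-|U|^{-1/2})$ is \emph{not} a clean constant, so for each small value of $q^n$ and $\delta$ one must determine precisely which values of $|R_G(V)|$ (equivalently, which values of $|U|$, since $|U|$ divides $|G|=|R_G(V)|q^{n\delta}|H|$ and $U$ is a chief-factor-sized piece) make the inequality fail; this is what pins down the precise bounds $|R_G(V)|\le 2$ in (2) and $|R_G(V)|\le 3$ in (4). I would organise this by noting that $|U|\ge|V|^{\delta}/|R_G(V)|$ is false in general — rather $U$ is a minimal normal subgroup-type complement so $|U|$ is a power of the prime dividing $|V|$ — and more simply just bound $|U|\le|G|$ crudely first to see which cases are even in danger, then refine. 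One subtlety to get right: when $|H|<|V|$ and $H$ is a transitive or near-transitive linear group on $V$, $|H|$ can be as large as $q^n-1$, so the estimate $|H|^{1/2}<q^{n/2}$ is close to tight and cannot be improved to save a borderline case — this is why $q^n=3,4$ genuinely survive rather than being eliminable by a sharper $|H|$ bound. Everything else is routine arithmetic with the function $f(x)=(c+x/(x-1))/x^{(\delta-1)/2}$.
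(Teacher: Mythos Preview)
Your overall strategy matches the paper's: both use the two bounds from Proposition~\ref{CheboProp} (with $m=0$ by solubility), split on whether $|H|<|V|$ or $|H|\ge|V|$ and on whether $\delta=1$ or $\delta\ge2$, and then do arithmetic. Two points, however, need correction.

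First, you are confused about $|U|$. Since $I_G(V)=R_G(V)\times U$ and $I_G(V)/R_G(V)\cong V^{\delta}$, we have $|U|=|V|^{\delta}=q^{n\delta}$ \emph{exactly}. This is what makes the right-hand side $\tfrac{5}{3}\bigl(1-|U|^{-1/2}\bigr)=\tfrac{5}{3}\cdot\frac{q^{n\delta/2}-1}{q^{n\delta/2}}$ completely explicit, and the paper uses it throughout (the factor $\frac{q^{n\delta/2}}{q^{n\delta/2}-1}$ in each displayed inequality is precisely $|U|^{1/2}/(|U|^{1/2}-1)$). Your remarks about bounding $|U|$ crudely by $|G|$, or writing $|U|\ge|V|^{\delta}/|R_G(V)|$, are unnecessary and reflect a misreading of Lemma~\ref{corona}.

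Second, and more seriously, your treatment of the case $|H|\ge|V|$, $\delta=1$ has a real gap. Using only $p\ge 1/|V|$ gives $\alpha_U\le\frac{q}{q-1}\,|V|$ and hence $\alpha_U/|G|^{1/2}\le q/(q-1)$. For $q=2$ this is $2$, which does \emph{not} beat $\tfrac{5}{3}(1-q^{-n/2})<\tfrac{5}{3}$ for any $n$; so ``handling small $q^n$ separately'' cannot rescue the argument, as the failure is not confined to small $q^n$. The paper's proof of this case (its Case~4) is in fact the most delicate part of the lemma: one must use the sharper estimate $1/p\le q^n/2$ when $H$ is not transitive on $V\setminus\{0\}$, and when $H$ \emph{is} a transitive linear group one must go through the list of such groups, use $|H|=(q^n-1)\rho$ with $\rho$ the point-stabiliser order, and dispose of several specific small cases $(q,n,\rho)\in\{(2,4,2),(2,3,3),(2,2,2)\}$ by computing $p$ exactly. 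None of this is visible in your sketch, and it cannot be replaced by the crude $p\ge 1/|V|$.
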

\begin{proof} Note that $m=0$ since $H$ is soluble. We distinguish the following cases:
	
	\noindent
	Case 1) $|H| < |V|$ and $\delta\neq 1.$
	Since, $|G|=\lambda|H||V|^\delta$ for some positive integer $\lambda$ it suffices to prove
	\begin{equation}\frac{3\left(\delta+\frac{q^n}{q^n-1}\right)
		\left(\frac{q^{n\delta/2}}{q^{n\delta/2}-1}\right)|H|}{5\lambda^{1/2}|H|^{1/2}q^{n\delta/2}}\leq \frac{3}{5\lambda^{1/2}}\left(\delta+\frac{q^n}{q^n-1}\right)
	\left(\frac{(q^{n}-1)^{1/2}}{q^{n\delta/2}-1}\right)< 1.
	\end{equation}
	If $\delta\geq 3$ then
	$$\frac{3}{5\lambda^{1/2}}\left(\delta+\frac{q^n}{q^n-1}\right)
	\left(\frac{(q^{n}-1)^{1/2}}{q^{n\delta/2}-1}\right)\leq 
	\frac{3}{5\lambda^{1/2}}\left(3+\frac{q^n}{q^n-1}\right)
	\left(\frac{(q^{n}-1)^{1/2}}{q^{3n/2}-1}\right)< 1.$$
	Suppose $\delta=2.$ If $q^n\geq 5$, then 
	$$\frac{3}{5\lambda^{1/2}}\left(\delta+\frac{q^n}{q^n-1}\right)
	\left(\frac{(q^{n}-1)^{1/2}}{q^{n\delta/2}-1}\right)\leq 
	\frac{3}{5\lambda^{1/2}}\left(2+\frac{q^n}{q^n-1}\right)
	\left(\frac{(q^{n}-1)^{1/2}}{q^{n}-1}\right)< 1.$$
	Suppose $\delta=2$ and $q^n=4.$ We have $|H|=3$ so if $\lambda \neq 1$, then
	$$\frac{3\left(\delta+\frac{q^n}{q^n-1}\right)
		\left(\frac{q^{n\delta/2}}{q^{n\delta/2}-1}\right)|H|}{5\lambda^{1/2} |H|^{1/2}q^{n\delta/2}}\leq  \frac{2\cdot 3^{1/2}}{3\cdot \lambda^{1/2}}< 1$$
	Suppose $\delta=2$ and $q^n=3.$ We have $|H|=2$ so if $\lambda>2,$  then
	$$\frac{3\left(\delta+\frac{q^n}{q^n-1}\right)
		\left(\frac{q^{n\delta/2}}{q^{n\delta/2}-1}\right)|H|}{5\lambda^{1/2} |H|^{1/2}q^{n\delta/2}}\leq  \frac{21\cdot 2^{1/2}}{20\cdot \lambda^{1/2}}< 1.$$
	
	\noindent Case 2) $|H|\geq |V|$ (and consequently $n\neq 1$) and $\delta\neq 1.$
	It suffices to prove that
	\begin{equation}\frac{3\left(\delta+\frac{q}{q-1}\right)
		\left(\frac{q^{n\delta/2}}{q^{n\delta/2}-1}\right)q^n}{5|H|^{1/2}q^{n\delta/2}}< 1.
	\end{equation}
	Suppose $q^n\neq 4.$
	$$\frac{3\left(\delta+\frac{q}{q-1}\right)
		\left(\frac{q^{n\delta/2}}{q^{n\delta/2}-1}\right)q^n}{5|H|^{1/2}q^{n\delta/2}}\leq \frac{3\left(2+\frac{q}{q-1}\right)
		\left(\frac{q^{n}}{q^{n}-1}\right)}{5q^{n/2}}< 1.$$
	Suppose $q^n= 4.$ We have $H=\GL(2,2)\cong \perm(3),$ and consequently $|H|=6$ and $p=2/3.$ Hence
	$$\frac{\alpha_U}{|G|^{1/2}}\frac{5}{3}\left(\frac{|U|^{1/2}}{|U|^{1/2}-1}\right)
	\leq \frac{(\delta+2)\cdot \frac 1 p \cdot \frac 3 5 \cdot \frac 4 3}{|H|^{1/2}\cdot 2^\delta}\leq \frac{6}{5\sqrt 6}< 1.$$
	
	\noindent Case 3) $|H| < |V|$ and $\delta=1.$
	Since, $|G|=\lambda|H||V|^\delta$ for some positive integer $\lambda$ it suffices to prove
	\begin{equation}\frac{3\left(\frac{q^n}{q^n-1}\right)\left(\frac{q^{n/2}}{q^{n/2}-1}\right)
		|H|^{1/2}}{5\lambda^{1/2}q^{n/2}}
	< 1.
	\end{equation}
	If $q^n\geq 8,$ or $7 \geq q^n\geq 4$ and $\lambda\neq 1,$  or $q^n=3$ and $\lambda >3,$ then
	$$\frac{3\left(\frac{q^n}{q^n-1}\right)\left(\frac{q^{n/2}}{q^{n/2}-1}\right)
		|H|^{1/2}}{5\cdot \lambda^{1/2}\cdot q^{n/2}}\leq \frac{3\left(\frac{q^n}{q^n-1}\right)\left(\frac{(q^n-1)^{1/2}}{q^{n/2}-1}\right)}{5\cdot \lambda^{1/2}}
	= \frac{3\left(\frac{q^n}{(q^n-1)^{1/2}(q^{n/2}-1)}\right)}{5\cdot \lambda^{1/2}}< 1.$$

	\noindent Case 4) $|H|\geq |V|$ (and consequently $n\neq 1$) and $\delta= 1.$
	It suffices to prove that
	\begin{equation}\label{54}\frac{3\left(\frac{q}{q-1}\right)
		\left(\frac{q^{n/2}}{q^{n/2}-1}\right)}{5|H|^{1/2}q^{n/2}p}< 1.
	\end{equation}
	If $H$ is not a transitive linear group, then $|H|^{1/2}q^{n/2}p\geq 2,$ so
	it suffices to have $$\left(\frac{q}{q-1}\right)
	\left(\frac{q^{n/2}}{q^{n/2}-1}\right) \leq \frac{10}{3},$$
	which is true if $(q,n)\neq (2,2).$ On the other hand, we may exclude the case $(q,n)=(2,2):$ indeed 
	the only soluble irreducible subgroup of $\GL(2,2)$ with order $\geq 4$ is $\GL(2,2),$ which is transitive on the nonzero vectors.
	
	If $H$ is a transitive linear group, then $|H|=(q^n-1)\rho,$ with $\rho$
	the order of the stabilizer in $H$ of a nonzero vector and 
	$$\frac{3\left(\frac{q}{q-1}\right)
		\left(\frac{q^{n/2}}{q^{n/2}-1}\right)}{5|H|^{1/2}q^{n/2}p}\leq \frac{3\left(\frac{q}{q-1}\right)
		\left(\frac{q^{n/2}}{q^{n/2}-1}\right)}{5\sqrt{\rho}},$$
	so it suffices to have
	$$\left(\frac{q}{q-1}\right)
	\left(\frac{q^{n/2}}{q^{n/2}-1}\right) \leq \frac{5\sqrt \rho}{3},$$
	which is true if $q\geq 3$ and if $(q,n,\rho)\notin \{(2,4,2), (2,3,2),$ $(2,3,3), (2,2,2))\}.$  We may exclude the case  $(q,n,\rho)=(2,3,2)$ (there is no transitive linear subgroup of $\GL(3,2)$ of order 14).
	If $(q,n,\rho)=(2,4,2),$ then $H=\GL(1,16)\rtimes C_2$, hence $p=6/30$ so $|H|^{1/2}q^{n/2}p\geq 2$ and (\ref{54}) is true. If $(q,n,\rho)=(2,3,3)$ then
	$H=\gaml(1,8)$ and consequently $p=15/21$ and
	$$\frac{3\left(\frac{q}{q-1}\right)
		\left(\frac{q^{n/2}}{q^{n/2}-1}\right)}{5|H|^{1/2}q^{n/2}p}=\frac{3\cdot 2 \cdot\sqrt 8\cdot 21}{5\cdot (\sqrt 8-1) \cdot 15 \cdot \sqrt{21}\sqrt{8}
	}<1.$$
	If $(q,n,\rho)=(2,2,2)$ then
	$H=\GL(2,2)$ and consequently $p=2/3$ and
	$$\frac{3\left(\frac{q}{q-1}\right)
		\left(\frac{q^{n/2}}{q^{n/2}-1}\right)}{5|H|^{1/2}q^{n/2}p}=\frac{3\cdot 2 \cdot 2\cdot 3}{5\cdot 2 \cdot 2 \cdot \sqrt{6}
	}<1. \qedhere$$
\end{proof}

\begin{lemma}\label{dirab}
	If $G$ is one of the exceptional cases in the statement of Lemma \ref{tecn}, then
	$C(G)< \frac{5}{3}\sqrt{|G|}.$
\end{lemma}
\begin{proof}
	This follows easily by direct computation. We use MAGMA, and the code from \cite[Appendix, page 36]{kz} to compute $C(G)$ explicitly whenever $G$ is a group satisfying the conditions of one of the exceptional cases of Lemma \ref{tecn}.
\end{proof}

The next step is to deal with the case of a central chief factor.
\begin{lemma}\label{13}
	If $G\cong C_p^\delta$, then
	$C(G)\leq \frac{5}{3}\sqrt{|G|},$ with equality if and only if $G=C_2 \times C_2.$
\end{lemma}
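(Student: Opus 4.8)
The plan is to compute, or at least estimate sharply, the Chebotarev invariant of the elementary abelian $p$-group $G \cong C_p^\delta$ directly, since here every subgroup is normal and so "invariable generation'' coincides with ordinary generation. Thus $C(G)$ equals the expected number of uniformly random elements of $V := C_p^\delta$ needed to span $V$ as an $\FF_p$-vector space. There is a classical closed form for this: writing $P(k)$ for the probability that $k$ random vectors span, one has $C(G) = \sum_{k\ge 0}(1 - P(k)) = \sum_{i=0}^{\delta-1} \frac{p^\delta}{p^\delta - p^i}$ (the same formula already invoked in the $|H|=1$ case of the proof of Proposition~\ref{CheboProp}). So the task reduces to the elementary inequality
$$\sum_{i=0}^{\delta-1}\frac{p^\delta}{p^\delta-p^i} \;\le\; \frac{5}{3}\,p^{\delta/2},$$
with equality exactly when $p=2$, $\delta=2$.

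First I would dispose of the small cases by hand: for $\delta = 1$ the left side is $\frac{p}{p-1} \le 2 \le \frac{5}{3}\sqrt{p}$ for all $p \ge 2$ (with strict inequality, since $\frac{5}{3}\sqrt 2 > 2$), and for $p = 2$, $\delta = 2$ the sum is $\frac{4}{3} + \frac{4}{2} = \frac{10}{3} = \frac{5}{3}\sqrt 4$, giving the claimed equality. For $p = 2$, $\delta = 3$ one checks $\frac{8}{7}+\frac{8}{6}+\frac{8}{4} = \frac{8}{7}+\frac{4}{3}+2 < \frac{5}{3}\cdot 2\sqrt{2}$ numerically, and similarly $p=3$, $\delta=2$ gives $\frac{9}{8}+\frac{9}{6} = \frac{21}{8} < \frac{5}{3}\cdot 3 = 5$. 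Then for the general case I would bound the sum crudely: $\sum_{i=0}^{\delta-1}\frac{p^\delta}{p^\delta-p^i} \le \delta + \sum_{i=0}^{\delta-1}\frac{p^i}{p^\delta - p^i} < \delta + \frac{1}{p-1}\cdot\frac{p}{p-1} = \delta + \frac{p}{(p-1)^2}$ (the bound already recorded in Proposition~\ref{CheboProp}), so it suffices to show $\delta + \frac{p}{(p-1)^2} \le \frac{5}{3}p^{\delta/2}$ whenever $(p,\delta)$ is not one of the few small pairs handled above.

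The remaining step is a monotonicity/induction argument on $\delta$ (for fixed $p$) and on $p$ (for fixed $\delta$). Fixing $p \ge 2$, the function $\delta \mapsto \frac{5}{3}p^{\delta/2} - \delta - \frac{p}{(p-1)^2}$ has increments $\frac{5}{3}p^{\delta/2}(\sqrt p - 1) - 1$, which is increasing in $\delta$ and already positive once $\frac{5}{3}p^{\delta/2}(\sqrt p - 1) \ge 1$; for $p \ge 2$ this holds as soon as $\delta \ge 2$ (for $p=2$) or $\delta\ge 1$ (for $p\ge 3$), so it is enough to verify the base cases $\delta \in \{2,3\}$ for $p=2$ and $\delta \in \{1,2\}$ for $p = 3$, plus the trivial $\delta=1$ row for all $p$, which were all dispatched above; and for fixed small $\delta$ the inequality $\delta + \frac{p}{(p-1)^2} \le \frac{5}{3}p^{\delta/2}$ only improves as $p$ grows since the right side grows like $p^{\delta/2}$ while $\frac{p}{(p-1)^2} \to 0$. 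Assembling these observations gives the bound in all cases and pins down equality.

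I do not anticipate a serious obstacle here: the only delicate point is making sure the crude bound $\delta + \frac{p}{(p-1)^2}$ is still strong enough to beat $\frac{5}{3}p^{\delta/2}$ in the genuinely borderline regime, which is precisely $p=2$ with $\delta$ small — and there one simply computes the exact sum rather than using the estimate, so equality is detected correctly at $C_2\times C_2$ and strict inequality everywhere else.
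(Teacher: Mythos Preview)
Your proposal is correct and is essentially the paper's own argument: the closed formula $\sum_{i=0}^{\delta-1} p^\delta/(p^\delta - p^i)$, the crude estimate $\delta + p/(p-1)^2$ for the generic regime, and direct evaluation for the borderline cases $(p,\delta)\in\{(2,1),(2,2),(2,3)\}$. One small wrinkle to straighten when you write it up: for $p=2$ the crude bound still fails at $\delta=3$ (since $5 > \tfrac{5}{3}\cdot 2\sqrt 2$), so the monotonicity induction on $\delta$ must be anchored at $\delta=4$ (where one checks $6 < 20/3$ directly) rather than at the hand-computed cases $\delta\in\{2,3\}$.
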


\begin{proof}If $p\neq 3$ or $p=2$ and $\delta > 3,$ then
	$$C(G) = \sum_{0\leq i\leq \delta-1}\frac{p^\delta}{p^\delta-p^i}\leq \delta+\frac{p}{(p-1)^2} < \frac{5\cdot p^{\delta/2}}3=\frac{5\cdot \sqrt{|G|}}3.$$
	If $(p,\delta)=(2,1)$ then
	$$\frac{C(G)}{\sqrt{G}}=\frac{2}{\sqrt{2}}=\sqrt{2};$$
	if $(p,\delta)=(2,2)$ then
	$$\frac{C(G)}{\sqrt{G}}=\frac{\frac{4}{2}+\frac{4}{3}}{{2}}=\frac{5}{3};$$
	if $(p,\delta)=(2,3)$ then
	$$\frac{C(G)}{\sqrt{G}}=\frac{\frac{8}{4}+\frac{8}{6}+\frac{8}{7}}{\sqrt{8}}
	\sim 1.5826.\qedhere$$
\end{proof}

\begin{proof}[Proof of Part (ii) of Theorem \ref{KZConjecture}]
	We prove the claim by induction on the order of $|G|.$ If $\frat (G)\neq 1$, then the conclusion follows immediately since $C(G)=C(G/\frat (G)).$ Otherwise $G$ contains a normal subgroup $U$ as in Lemma \ref{sotto}. If $G=U\cong C_p^\delta,$ then the conclusion follows
	from Lemma \ref{13}. Otherwise, Lemma \ref{tecn}, together with the inductive hypothesis gives
	$$C(G)\leq C(G/U)+\alpha_U < \frac{5 \sqrt{|G|}}{3 \sqrt{|U|}}+\frac{ 5(\sqrt{|U|} -1)\sqrt{|G|}}{3\sqrt{|U|}}=\frac{5}{3}\sqrt{|G|}$$
	as claimed.\end{proof}

\end{document}